\theoremstyle{thmstyleone}%
\newtheorem{theorem}{Theorem}
\theoremstyle{thmstyletwo}%
\theoremstyle{thmstylethree}%
\newtheorem{lemma}{Lemma}
\begin{document}

\title[Asymptotic Analysis of a General Multi-Structured Population Model]{Asymptotic Analysis of a General Multi-Structured Population Model}


\author*[1]{\fnm{Sabina L.} \sur{Altus}}\email{sabina.altus@colorado.edu}

\author[2,3,4]{\fnm{Jeffrey C.} \sur{Cameron}}\email{jeffrey.c.cameron@colorado.edu}

\author*[1]{\fnm{David M.} \sur{Bortz}}\email{david.bortz@colorado.edu}

\affil[1]{\orgdiv{Department of Applied Mathematics}, \orgname{University of Colorado}, \orgaddress{\city{Boulder}, \postcode{80309-0526}, \state{CO}, \country{USA}}}

\affil[2]{\orgdiv{Department of Biochemistry}, \orgname{University of Colorado}, \orgaddress{\city{Boulder}, \postcode{80309-0596}, \state{CO}, \country{USA}}}

\affil[3]{\orgdiv{Renewable and Sustainable Energy Institute}, \orgname{University of Colorado}, \orgaddress{\city{Boulder}, \postcode{80309-0027}, \state{CO}, \country{USA}}}

\affil[4]{\orgname{National Renewable Energy Laboratory}, \orgaddress{\city{Golden}, \postcode{80401-3393}, \state{CO}, \country{USA}}}

\abstract{Structured populations are ubiquitous across the biological sciences.  Mathematical models of these populations allow us to understand how individual physiological traits drive the overall dynamics in aggregate. For example, linear age- or age-and-size-structured models establish constraints on individual growth under which the age- or age-and-size-distribution stabilizes, even as the population continues to grow without bound. However, individuals in real-world populations exhibit far more structural features than simply age and size. Notably, cyanobacteria contain carboxysome organelles which are central to carbon fixation and can be older (if inherited from parent cells) or younger (if created after division) than the enveloping cell. Motivated by a desire to understand how carboxysome age impacts growth at the colony level, we develop a multi-structured model which allows for an arbitrary (but finite) number of structure variables. We then derive and solve the renewal equation for cell division to obtain an asymptotic solution, and show that, under certain conditions, a stable structural distribution is reached.}

\keywords{cyanobacteria, structured population modeling, semigroup of linear operators, abstract renewal equation, Laplace transform}


\pacs[MSC Classification]{35A30,47D06,92D25}

\maketitle

\section{Introduction} \label{sec1}
Structured populations are ubiquitous across the biological sciences. Mathematical models of these populations are used in a variety of applications throughout epidemiology \cite{feng2005global, castillo1998global}, ecology \cite{pruss1981equilibrium}, and cell biology \cite{arino1995survey}. The motivating goal is to understand the dynamics of a population for which individual behavior varies according to certain features (e.g., age, size, physiological traits) which may be tracked as structure variables.

In deriving models of this type, we interpret the population as a distribution over its structure variables and impose conservation laws akin to those obeyed in fluid dynamics \cite{MetzDiekmannLevin1986}. As such, structured population models consist of a balance law---a partial differential equation (PDE) describing the evolution of an initial cohort, and a boundary condition describing entry of offspring into the population. The first of this class of mathematical models was the linear age-structured model proposed by Sharpe and Lotka in 1911 (and rederived by McKendrick in 1926) \cite{sharpe1911problem, m1925applications}. Since that time, this model has been generalized many times over to a size-structured model, an age-and-size-structured model, as well as nonlinear versions of all of these \cite{MetzDiekmannLevin1986,sinko1967new,bell1967cell,bellcell} (see G.F. Webb's monograph \cite{webb1985theory} on this topic for a more complete history). In general, analysis of these structured equations has vastly expanded our understanding of how overall population dynamics are governed by the defining features of these classes (i.e., their fecundity and death rates), as well as the competitive or cooperative interactions between them, and their environment. 

A solution to a structured population model determines the evolution of the whole population through careful accounting of individual contributions. The general solution procedure is to apply the method of characteristics, resulting in a piecewise solution which propagates the initial distribution from the $t = 0$ boundary, and future generations from the age $a = 0$ boundary along characteristic curves. Resolving the solution at the $a = 0$ boundary inevitably leads to a Volterra-type integral renewal equation, solved by Laplace transform \cite{bellman1959asymptotic}. The linear multi-structured model presented and analyzed in this work similarly follows this general procedure, but requires an abstract setting. Solutions to the renewal equation are operators in a Banach space, and we use properties of the semigroup associated with the model solution to determine the long-term population behavior through an asymptotic solution.

The multi-structured model was developed with the aim of better understanding how cyanobacterial growth is regulated by the efficiency of their carboxysomes, organelles which facilitate carbon-fixation \cite{hill2020life}. While carboxysome formation and functionality is well established, the relationship between carboxysome age and declining capability has not yet been fully characterized \cite{cameron2013biogenesis, hill2020life}. Cyanobacteria often contain carboxysomes of different ages as they may be inherited from parent cells, or formed after division. By tracking the age of each carboxysome as an additional structure variable, the multi-structured model can help elucidate the relationship to efficiency and impact on growth at both the cellular and population level.

With this application in mind, certain model components are necessarily defined for the cyanobacterial population modeling context, but we aim to maintain generality throughout. At relevant points in this work, we note where researchers could make substitutions for an alternative modeling application.

The paper is structured as follows: In Section \ref{sec2} we present the multi-structured model equations, and specify the necessary components for application to a cell population. In Section \ref{sec3}, we use the method of characteristics to obtain a model solution, and explain why an asymptotic solution will be more informative. In Section \ref{sec4}, we present the semigroup solution and its key properties which validate the analysis presented in Section \ref{sec5}, wherein we derive and solve the abstract renewal equation. We conclude in Section \ref{sec6} with a brief discussion. 

\section{Model Presentation} \label{sec2}
The partial differential equation-based multi-structured model describes the time evolution of a population as the continuous evolution of the distribution of physiological traits across the population. Individual members of the population are fully characterized by their physiological state, that is, their age $a$, and state with respect to a vector of structure variables, $\bar{\mathbf{x}}$, the first of which, $x_1$, refers to size. As such, individuals are expressed as points $(a, \bar{\mathbf{x}}) \in \mathbb{R}^{k + 1}$ in age $a \in \mathbb{R}_+ = [0, \infty)$ and $k$-dimensional state space $\Omega \subset \mathbb{R}^k$, where the state vector $\bar{\mathbf{x}} \in \Omega$ is constructed such that, $$\bar{\mathbf{x}} = \begin{bmatrix} x_1 \\ \mathbf{x} \end{bmatrix} = \begin{bmatrix} x_1 \\ x_2 \\ x_3 \\ \vdots \\ x_k \end{bmatrix} = \begin{bmatrix} \text{Size} \\ \text{Structure variable 1} \\ \text{Structure variable 2} \\ \vdots \\ \text{Structure variable } k-1 \end{bmatrix}.$$ 
The additional structure variables are assumed to take on non-negative values such that $x_i \in \mathbb{R}_+$ for all $i > 1$. Size, $x_1$, however, is restricted to the interval $0<[x_m, x_M]<\infty$, as we impose both a minimum and maximum allowable size, $x_m$ and $x_M$, respectively. It will be necessary at times to refer to age $a$ and size $x_1$ separately from the additional structure variables, hence the notation $\mathbf{x} = [x_2, \cdots, x_k]^T$ in reference to the structure variables apart from size.

Model solutions, $n(t, a, \bar{\mathbf{x}})$, give the age and state distribution of the population at any time $t$, interpreted by integrating over regions of interest in age and state space. For example, the number of individuals at time $t$ of age $a \in [a_1, a_2]$ and state $\bar{\mathbf{x}} \in \omega \subset \Omega$ is, $$\int_{a_1}^{a_2} \int_\omega n(t, a, \bar{\mathbf{x}}) d\bar{\mathbf{x}} da,$$ with $n(t, a, \bar{\mathbf{x}})$ naturally defined on the Banach space $L^1(\mathbb{R}_+ \times \mathbb{R}_+ \times \Omega)$ such that the population size remains finite for finite time.

\subsection{Model Equations}
Consider the multi-structured model equations \begin{subequations} \label{pde_model}
    \begin{align}
        \label{model_eq1} \frac{\partial n}{\partial t} &+ \frac{\partial n}{\partial a} + \nabla \cdot \left[\bar{\mathbf{v}}(a, \bar{\mathbf{x}})n(t, a, \bar{\mathbf{x}})\right] = -\mu(a, \bar{\mathbf{x}})n(t, a, \bar{\mathbf{x}})\\ 
        \label{model_eq3} &n(t, 0, \bar{\mathbf{x}}) = B(t, \bar{\mathbf{x}}) = \int_0^\infty \int_{\Omega_r} \beta(a, \bar{\mathbf{y}}, \bar{\mathbf{x}})n(t, a, \bar{\mathbf{y}})d\bar{\mathbf{y}}da\\
        \label{model_eq2} &n(0, a, \bar{\mathbf{x}}) = \phi(a, \bar{\mathbf{x}})
    \end{align}
\end{subequations}
where $$ \nabla \cdot \left[\bar{\mathbf{v}}(a, \bar{\mathbf{x}})n(t, a, \bar{\mathbf{x}})\right] = \sum_{i=1}^k \frac{\partial \left[v_i(a, \bar{\mathbf{x}})n(t, a, \bar{\mathbf{x}})\right]}{\partial x_i}$$ is the divergence of the vector field $\bar{\mathbf{v}}(a, \bar{\mathbf{x}})n(t, a, \bar{\mathbf{x}})$.
The model consists of three components: the evolution equation $(\ref{model_eq1})$, a linear, hyperbolic partial differential equation expressing the aging and growth processes as translation through state space according to the flow $\bar{\mathbf{v}}(a, \bar{\mathbf{x}})$, the age- and state-specific loss rate $\mu(a, \bar{\mathbf{x}}) = d(a, \bar{\mathbf{x}}) + b(a, \bar{\mathbf{x}})$ due to death $d(a, \bar{\mathbf{x}})$ and division $b(a, \bar{\mathbf{x}})$. The boundary condition $(\ref{model_eq3})$ describes the renewal process, i.e., entry of offspring at the age $a = 0$ boundary. The boundary condition is in the form of a renewal equation as the solution at time $t$ is determined by integrating the birth modulus $\beta(a, \bar{\mathbf{y}}, \bar{\mathbf{x}})$, the average number of offspring of state $\bar{\mathbf{x}}$ produced per unit time by an individual of age $a$ and state $\bar{\mathbf{y}}$, against the population distribution at time $t$. In general, there is a required growth period before individuals may reproduce, so the interval of integration is restricted to the subset $\displaystyle \Omega_r \subset \Omega$ of allowable reproductive states $\bar{\mathbf{y}}$. Lastly, the initial condition $(\ref{model_eq2})$ prescribes the age and state distribution $\phi(a, \bar{\mathbf{x}})$ of the population cohort present at time zero.

\subsection{Application to a Cell Population}
In the modeling context of an evolving cyanobacterial cell population, we can define specific model components through simple, biologically motivated assumptions about cell growth and division. We will use the age of a carboxysome as a proxy for its photosynthetic efficacy, and take each additional structure variable in the vector $\bar{\mathbf{x}}$ to be the age of a single carboxysome. We fix $k$, the number of structure variables in addition to size, to be the number of carboxysomes present at the time of division. That is, a cyanobacteria of age $a = 0$ will have $\displaystyle \frac{k}{2}$ carboxysomes, inherited from its mother, and form an additional $\displaystyle \frac{k}{2}$ carboxysomes before dividing.

\subsubsection{Cell Growth}
Exponential growth is a biologically reasonable assumption for the majority of cell populations, including cyanobacterial \cite{campos2014constant}. Under this assumption, cell size increases in proportion to itself at a constant rate, denoted as $\alpha$. This rate could depend on age and physiological state, as in $\alpha = \alpha(a, \bar{\mathbf{x}})$. 

\subsubsection{Cell Death and Division}
The renewal process of a bacterial cell population is cell division (mitosis). In our model conception, mitosis is an instantaneous event wherein a mother cell of state $\bar{\mathbf{y}}$ divides symmetrically into two identical daughter cells of state $\displaystyle \bar{\mathbf{x}} = \frac{1}{2} \bar{\mathbf{y}}$ appearing at the age $a = 0$ boundary. The symmetric division assumption imposes a partition of the state space into a region $\displaystyle \Omega_r = \left(\frac{x_M}{2}, x_M \right] \times \mathbb{R}_+^{k - 1} \subset \Omega$ of cells large enough to reproduce (divide), and a region $\displaystyle \Omega_b = \left(x_m, \frac{x_M}{2} \right] \times \mathbb{R}_+^{k - 1} \subset \Omega$ of allowable states at birth.

To impose symmetric cell division, we form the birth modulus $\beta$ by applying a Dirac-delta function to each structure variable as in, \begin{equation} \label{birth_modulus}
    \beta(a, \bar{\mathbf{y}}, \bar{\mathbf{x}}) = 2 \beta_1(a) \delta \left(\bar{\mathbf{x}} - \frac{1}{2} \bar{\mathbf{y}} \right) \text{ for } \bar{\mathbf{x}} \in \Omega_b, \ \bar{\mathbf{y}} \in \Omega_r,
\end{equation}
where the factor of two balances the loss of the single mother cell with the appearance of two daughter cells, both of age $a = 0$. The term $\beta_1(a)$ represents the probability of division as a function of age alone. Interpreting the Dirac-delta function appearing in (\ref{birth_modulus}) above as a probability distribution, we see that the probability of a mother cell of state $\bar{\mathbf{y}}$ producing a daughter cell of any state $\bar{\mathbf{x}}$ other than $\frac{1}{2}\bar{\mathbf{y}}$ is zero. 

Integrating the birth modulus over $\Omega_b$ with respect to $\bar{\mathbf{x}}$ gives the total average number of daughter cells produced per unit time by a mother cell of age $a$ and state $\bar{\mathbf{y}}$. This becomes the rate of cell loss due to division, \begin{align} \begin{split}
        b(a, \bar{\mathbf{y}}) &= \frac{1}{2}\int_{\Omega_b} \beta(a, \bar{\mathbf{y}}, \bar{\mathbf{x}}) d\bar{\mathbf{x}}\\
        & = \frac{1}{2}\int_{\Omega_b} 2\beta_1(a) \delta \left(\bar{\mathbf{x}} - \frac{1}{2}\bar{\mathbf{y}} \right) d\bar{\mathbf{x}}\\
        &= \beta_1(a)\chi_{\{\bar{\mathbf{y}} \in \Omega_r \}},
\end{split} \end{align}
where $\displaystyle \chi_{\{\bar{\mathbf{y}} \in \Omega_r \}}$ is the indicator function on $\Omega_r$. The factor of $\displaystyle \frac{1}{2}$ balances the removal of a dividing mother cell with her two daughter cells---as in, the rate at which offspring are produced is twice the rate of cell loss due to division. Assuming a constant death rate, $\displaystyle d(a, \bar{\mathbf{y}}) = \mu_d$, the total rate of cell loss per unit time becomes, \begin{equation}
    \mu(a, \bar{\mathbf{y}}) = \mu_d + \beta_1(a)\chi_{\{\bar{\mathbf{y}} \in \Omega_r \}}.
\end{equation}

Now that we have completely specified the model we will present our asymptotic solution.


\section{Model Solution} \label{sec3}
In this section, we describe our two-step procedure for solving the full multi-structured model. We first apply the method of characteristics to obtain a solution to \eqref{pde_model} which fully describes growth and evolution of a sterile population, and leads to a renewal equation at the age $a = 0$ boundary. We then prove the existence of a unique solution for a non-sterile population exists, and derive its power series representation by the method of successive approximations.

\subsection{Movement in State Space}
Growth or change in physiological state may be interpreted as translation through state space according to the flow determined by the velocity vector $\bar{\mathbf{v}}(a, \bar{\mathbf{x}}) = [v_1(a, \bar{\mathbf{x}}) \ \ v_2(a, \bar{\mathbf{x}}) \ \ \cdots \ \  v_k(a, \bar{\mathbf{x}})]^T$ where $\displaystyle v_i(a, \bar{\mathbf{x}}) = \frac{dx_i}{dt}(a, \bar{\mathbf{x}})$. Age- and state-specific velocity functions $v_i(a, \bar{\mathbf{x}})$ are required to be bounded, continuous, and continuously differentiable with respect to each argument. Additionally, $v_i(a, \bar{\mathbf{x}})$ must be strictly positive on the interior of $\Omega$ such that $\forall (a, \bar{\mathbf{x}}) \in (0, \infty) \times (x_m, x_M) \times (0, \infty)^k$, the infimum of $ \lvert v_i(a, \bar{\mathbf{x}}) \rvert > 0$. This guarantees the existence of a uniquely determined, continuous flow along characteristic curves throughout state space. And finally, $v_i(a, \bar{\mathbf{x}})$ must vanish on the boundary of $\Omega$ (denoted $\partial \Omega$) so that all trajectories beginning at time $t$ with $(a, \bar{\mathbf{x}}) \in \mathbb{R}_+ \times \Omega$, remain in the age- and state-space to ensure that the values of $a$ and $\bar{\mathbf{x}}$ remain where the velocity functions are defined. This restriction ensures that the solutions $n(t, a, \bar{\mathbf{x}})$ stay in $L^1(\mathbb{R}_+ \times \mathbb{R}_+ \times \Omega)$.

Since each velocity term $v_i(a, \bar{\mathbf{x}})$ is required to be continuously differentiable with respect to $x_i$, we note that the balance law (\ref{model_eq1}) may be written as a directional derivative in the $\langle 1, 1, \bar{\mathbf{v}} \rangle$ direction, 
\begin{equation} \label{dirderiv}
    D_{\langle 1, 1, \bar{\mathbf{v}} \rangle}n(t, a, \bar{\mathbf{x}}) = -\bigg(\mu(a, \bar{\mathbf{x}}) + \sum_{i = 1}^k \frac{\partial v_i}{\partial x_i}(a, \bar{\mathbf{x}})\bigg)n(t, a, \bar{\mathbf{x}}).
\end{equation}
This directional derivative (\ref{dirderiv}) gives the instantaneous rate of change at time $t$ in the direction of aging and growth from every age and state in $\mathbb{R}_+ \times \Omega$. 

\subsection{Characteristic and Growth Curves}
Setting the right-hand-side of (\ref{dirderiv}) equal to zero indicates that the rate of change in density $n$ at any point $(t, a, \bar{\mathbf{x}})$ is zero in the $\langle 1, 1, \bar{\mathbf{v}} \rangle$ direction. A parameterized curve advancing from an initial position $(t, a, \bar{\mathbf{x}})$ in the $\langle 1, 1, \bar{\mathbf{v}} \rangle$ direction (along which $n$ is constant) is called a {\it characteristic curve}. Integrating along these curves produces a solution to the model (\ref{pde_model}) where the density at time $t$ is expressed in terms of the initial data $\phi(a, \bar{\mathbf{x}})$ propagated forward in time along these characteristic curves. The characteristic curves\footnote{Capital letters are used to refer to characteristic curves. Generally, these will be presented with two arguments, as in $X(\theta, x)$, however, the characteristics in state space may also depend on age and other structure variables, considered to be fixed, and will be denoted explicitly by $X(\theta, x; a, \mathbf{x})$ only when necessary.} for this system are solutions $T = T(\theta, t), A = A(\theta, a), X_1 = X_1(\theta, x_1),$ and  $\mathbf{X} = \mathbf{X}(\theta, \mathbf{x})$ to the following system of differential equations, parameterized by the auxiliary variable $\theta$, which measures time when $t \leq a$, and age when $t > a$. 
\begin{align}\label{ode_system}
\begin{split}
    &\frac{d}{d\theta}[ T(\theta, t)] = 1, \ \ T(0, t) = t\\
    &\frac{d}{d\theta}[A(\theta, a)] = 1, \ \ A(0, a) = a\\
    &\frac{d}{d\theta}[X_1(\theta, x_1; a, \mathbf{x})] = v_1(A(\theta, a), x_1, \mathbf{X}(\theta, \mathbf{x})), \ \ X_1(0, x_1) = x_1\\
    &\frac{d}{d\theta}[X_i(\theta, x_i; a, x_{j \neq i})] = v_i(A(\theta, a), X_1(\theta, x_1), \mathbf{X}(\theta, \mathbf{x})), \ \  X_i(0, x_i) = x_i
\end{split}
\end{align}
The characteristics along which time passes and age advances are given by, $$T(\theta, t) = \theta + t \text{ and } A(\theta, a) = \theta + a.$$

For the characteristic curves describing growth, \begin{equation*}
    \frac{d}{d\theta}[X_1(\theta, x_1; a, \mathbf{x})] = v_1(A, x_1, \mathbf{X}) \ \Rightarrow \ \int_{x_1}^{X_1} \frac{d \xi}{v_1(A, \xi, \mathbf{X})} = \theta.
\end{equation*}
Let $\displaystyle G(x) = G(x; a, \mathbf{x}) = \int_{x_m}^{x} \frac{d \xi}{v_1(A(\theta, a), \xi, \mathbf{X}(\theta, \mathbf{x}))}$. Then $G(x)$ gives the \textit{time} required for an individual of age $a$ and state $\mathbf{x}$ to grow from the smallest possible size $x_m$ to arbitrary size $x \leq x_M$. An individual of \textit{fixed} size $x_1$ at time $t$, will reach arbitrary size $x \leq x_M$ a time $G(x) - G(x_1)$ later. Continuing from the above, we find $X_1(\theta, x_1) = G^{-1}(\theta + G(x_1))$.

The inverse, $G^{-1}(\theta; x_1) = G^{-1}(\theta; a, x_1, \mathbf{x})$, is guaranteed to exist as long as the physical growth rate $v_1: [x_m, x_M] \to \mathbb{R}_+$ is uniformly continuous and positive on $[x_m, x_M]$. Thus, we call $G^{-1}(\theta; x_1)$ the \textit{growth curve} as it computes the \textit{size} of an individual after a time period of length $\theta$. For instance, an individual of size $x_1$ at time $t_0$ will be of size $G^{-1}(\theta; x_1)$ at time $t_0 + \theta$. 

For each additional structure variable in $\mathbf{x}$, the characteristic curves $X_i(\theta, x_i; a, x_{j \neq i})$ will be similarly expressed through integral equations. From the last differential equation in \eqref{ode_system}, we find, \begin{equation*}
    \int_{x_i}^{X_i} \frac{d \xi}{v_i(A, X_1, \bar{\mathbf{X}}\lvert_{x_i = \xi})} = \theta.
\end{equation*}
Let \begin{equation*}
    F_i(x) = F_i(x; a, \mathbf{x}) = \int_0^x \frac{d \xi}{v_i(A, X_1, \mathbf{x} \lvert_{x_i = \xi})},
\end{equation*} be the \textit{time} required for the $i^{th}$ structure variable to increase from zero to $x$ along the characteristic curve. Then, $\theta = F_i(X_i) - F_i(x_i)$, and $ X_i(\theta, x_i) = F_i^{-1}(\theta + F_i(x_i)).$ 
The inverse functions, $F_i^{-1}(\theta; x_i) = F_i^{-1}(\theta; a, x_1, \mathbf{x})$, are again guaranteed to exist as long as $v_i: \mathbb{R}_+ \to \mathbb{R}_+$ is positive and uniformly continuous on $\mathbb{R}_+$. We note that $F_i^{-1}(\theta; x_i)$ should be interpreted as the value of the $i^{th}$ structure variable after a time period of length $\theta$.

Consider an individual of age $a$ and state $\bar{\mathbf{x}}$. The vector $\bar{\mathbf{X}}(\theta, \bar{\mathbf{x}})$ is the vector of characteristic curves along which the individual advances to its next state. That is, $\bar{\mathbf{X}}(\theta, \bar{\mathbf{x}})$ gives the state of this individual after a time interval of length $\theta$. Helpfully, $\bar{\mathbf{X}}(-a, \bar{\mathbf{x}})$ gives an individual's state-at-birth, which can always be found by traveling backwards along characteristic curves for a time $a$. 

\subsection{The Method of Characteristics}
We obtain a solution $n(t, a, \bar{\mathbf{x}})$ by integrating the total derivative of the density $n$ along characteristic curves, as in, \begin{equation*}
    \frac{d}{d\theta}\left[n(T(\theta, t), A(\theta, a), \bar{\mathbf{X}}(\theta, \bar{\mathbf{x}})) \right] = \frac{\partial n}{\partial t}\frac{dt}{d\theta} + \frac{\partial n}{\partial a}\frac{da}{d\theta} + \sum_{i = 1}^k \frac{\partial n}{\partial x_i}\frac{dx_i}{d\theta}
\end{equation*}
which we recognize from the model equations (\ref{pde_model}) as equivalent to, $$\frac{dn}{d\theta} = -\bigg(\mu(A, \bar{\mathbf{X}}) + \sum_{i = 1}^k \frac{\partial v_i}{\partial x_i}(A, \bar{\mathbf{X}})\bigg)n(T, A, \bar{\mathbf{X}}).$$
Integration with respect to $\theta$ produces the solution, $$n(T, A, \bar{\mathbf{X}}) = C\exp{\bigg[ -\int_0^\theta \mu(A, \bar{\mathbf{X}}) d\theta' - \int_0^\theta \sum_{i = 1}^k \frac{\partial v_i}{\partial x_i}(A, \bar{\mathbf{X}}) d\theta' \bigg]}.$$
To find the constant term $C$, we divide the $at$-plane into two regions along the line $a = t$, as depicted in Figure \ref{fig:atplane}. In the region where $t < a$, the solution acts to propagate the initial distribution $n(0, a, \bar{\mathbf{x}}) = \phi(a, \bar{\mathbf{x}})$ forward in time. In the region where $t \geq a$, the boundary condition $n(t, 0, \bar{\mathbf{x}}) = B(t, \bar{\mathbf{x}})$ determines a distribution entering at the age $a = 0$ boundary that is then propagated forward in the same way.

\begin{figure}
    \centering
    \includegraphics[width=\textwidth]{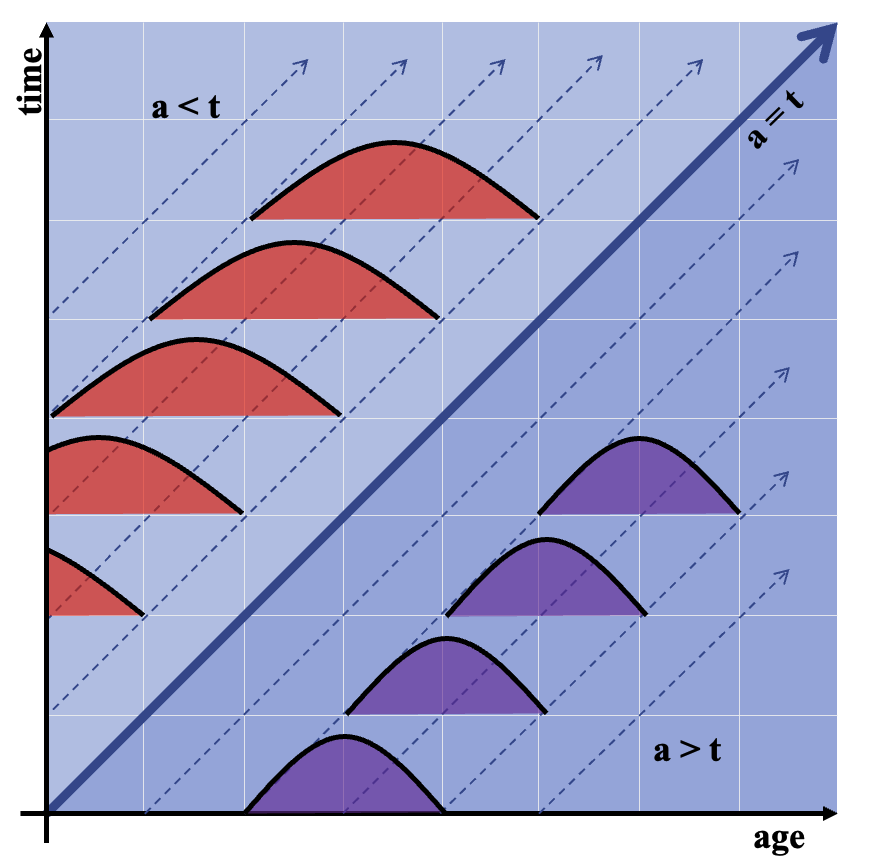}
    \caption{The $at$-plane divided along the line $a = t$. In the darker blue region where $a > t$, the initial age distribution, shown in purple, is propagated forward in time from the $t = 0$ boundary along characteristic curves, shown as dashed blue lines with a slope of 1. In the lighter blue region where $a < t$, the age distribution of offspring, determined from the boundary condition and shown in red, emerges at the age $a = 0$ boundary and propagates forward in time along characteristic curves with slope 1.}
    \label{fig:atplane}
\end{figure}

The {\it survival probability}  $$\Pi(\theta; a, \bar{\mathbf{x}}) = \exp{\left[-\int_0^{\theta} \mu(a - \theta + \sigma, \bar{\mathbf{X}}(\sigma - \theta, \bar{\mathbf{x}}))d\sigma \right]}$$ is the probability that an individual of age $a$ and state $\bar{\mathbf{x}}$ at time $t$ remains in the population at time $t + \theta$; that is, they will not have died or reproduced during the time interval of length $\theta$.  For an age-structured model, the survival probability is entirely sufficient to propagate the density of a population cohort forward in time.

For the multi-structured model, we must also resolve how the volume occupied by a given cohort is distorted as it is translated through state space. For example, imagine a cohort occupying the infinitesimally small size interval $[x_1, x_1 + dx_1]$ initially. Over a time interval of length $d\theta$, the cohort will have expanded to occupy the larger size interval $[x_1 + v_1(a, x_1, \mathbf{x})d\theta, x_1 + dx_1 + v_1(a, x_1 + dx_1, \mathbf{x})d\theta]$. However, from the directional derivative (\ref{dirderiv}), we see that growth as movement through state space (the left-hand-side) must be in balance with volume expansion (the right-hand-side).\footnote{For the moment we are ignoring the loss term.} To eliminate this imbalance, we can identify a correction via computing the Jacobian, \begin{equation*}
    J(\theta; a, \bar{\mathbf{x}}) = \exp\left[-{\int_0^{\theta} \sum_{i=1}^k \frac{\partial v_i}{\partial X_i}d\sigma}\right] = \left\lvert \frac{\partial(A(\theta, a), \bar{\mathbf{X}}(\theta,\bar{\mathbf{x}}))}{\partial (a, \bar{\mathbf{x}})} \right\rvert.
\end{equation*} We note that this is the determinant of the Jacobian matrix of characteristic curves (see Appendix \ref{appendix:jacobian} for proof). 

This term accounts for the fact that a cohort occupying a volume $V$ at time $t$ will grow to occupy a volume that is larger by a factor of $$\exp\left[{\int_0^{\theta} \sum_{i=1}^k \frac{\partial v_i}{\partial X_i}d\sigma}\right]$$ at time $t + \theta$ (for $\theta$ small \cite{bell1967cell}). Said another way, the Jacobian may be interpreted as a coordinate transformation in time from the current time $t$ to a time $t + \theta$ later on, where $\theta$ is small \cite{annosov1997ordinary}. 

Finally, we arrive at the following solution for the population density, 
\begin{equation}\label{n_soln}
    n(t, a, \bar{\mathbf{x}}) = \begin{cases} \phi \left(a - t, \bar{\mathbf{X}}(-t, \bar{\mathbf{x}}) \right) \Pi(t)J(t) \ \ \ \ \ \ \ \text{ for } t < a\\ n \left(t - a, 0, \bar{\mathbf{X}}(-a, \bar{\mathbf{x}}) \right) \Pi(a) J(a) \ \ \text{ for } t > a.
    \end{cases}
\end{equation}
Notice that in the region where $t < a$, the solution is fully determined from the initial condition. 
However, resolving the boundary condition to determine the solution in the $t > a$ region is likely not possible in a closed form. Instead, we derive a series solution below and use it to prove the existence and uniqueness of the asymptotic solution we seek in the following sections. 

\subsubsection{Series Solution}
To obtain the solution to (\ref{pde_model}) where $0 \leq a < t$, the piecewise-defined solution (\ref{n_soln}) for $n$ is inserted into the boundary condition, $$n(t, 0, \bar{\mathbf{x}}) = B(t, \bar{\mathbf{x}}) = \int_0^\infty \int_\Omega \beta(a, \bar{\mathbf{y}}, \bar{\mathbf{x}})n(t, a, \bar{\mathbf{y}})d\bar{\mathbf{y}}da,$$ resulting in the following integral equation for the birth-rate function $B(t, \bar{\mathbf{x}})$,

\begin{align} \begin{split} \label{renewJ}
    B(t, \bar{\mathbf{x}}) = \int_0^t \int_\Omega &\beta(a, \bar{\mathbf{y}}, \bar{\mathbf{x}})B(t - a, \bar{\mathbf{Y}}(-a, \bar{\mathbf{y}}))\\ & \times \exp \left[-\int_0^a \mu(\sigma, \bar{\mathbf{Y}}(\sigma - a, \bar{\mathbf{y}}))d\sigma \right]J(a)d\bar{\mathbf{y}}da\\
    + \int_t^\infty \int_\Omega &\beta(a, \bar{\mathbf{y}}, \bar{\mathbf{x}})\phi(a - t, \bar{\mathbf{Y}}(-t, \bar{\mathbf{y}}))\\ &\times \exp \left[-\int_0^t \mu(a + \sigma -t, \bar{\mathbf{Y}}(\sigma - t, \bar{\mathbf{y}}))d\sigma \right]J(t)d\bar{\mathbf{y}}da\\
   = K(B)(&t, \bar{\mathbf{x}}) + \Phi(t, \bar{\mathbf{x}}),
\end{split}{} \end{align}{}
where $K$ is an operator acting on $B$. We refer to the first integral as $K(B)(t, \bar{\mathbf{x}})$, and the second, $\Phi(t, \bar{\mathbf{x}})$ is defined through the initial condition $\phi$. 

The above equation may be simplified slightly by a change of variables wherein the Jacobian acts as a coordinate transformation on $\Omega$ from a time $dt$ ago to the present. Using the Jacobian matrix as a change of variables from the past, $\bar{\mathbf{Y}}(-t, \bar{\mathbf{y}})$, to the present current-time coordinates, $\bar{\mathbf{Y}}(t, \bar{\mathbf{y}})$, is exactly the inverse of the Jacobian appearing in the solution (\ref{renewJ}) above. The integral equation for $B(t, \bar{\mathbf{x}})$ is then equivalently expressed, 

\begin{align} \begin{split} \label{renewNoJ}
    B(t, \bar{\mathbf{x}}) = \int_0^t \int_\Omega &\beta(a, \bar{\mathbf{Y}}(a, \bar{\mathbf{y}}), \bar{\mathbf{x}})B(t - a, \bar{\mathbf{y}})\\ &\times \exp \left[-\int_0^a \mu(\sigma, \bar{\mathbf{Y}}(\sigma, \bar{\mathbf{y}}))d\sigma \right] d\bar{\mathbf{y}}da\\
    + \int_0^\infty \int_\Omega &\beta(a + t, \bar{\mathbf{Y}}(t, \bar{\mathbf{y}}), \bar{\mathbf{x}})\phi(a, \bar{\mathbf{y}})\\ &\times \exp \left[-\int_0^t \mu(a + \sigma, \bar{\mathbf{Y}}(\sigma, \bar{\mathbf{y}}))d\sigma \right] d\bar{\mathbf{y}}da\\
    = K(B)(&t, \bar{\mathbf{x}}) + \Phi(t, \bar{\mathbf{x}}).
\end{split}{} \end{align}

The following existence and uniqueness theorem adapted from \cite{tucker1988nonlinear} shows that the integral equation (\ref{renewNoJ}) for $B$ has a unique solution $B(t, \cdot)$ which admits a continuous mapping for any $t$ in the finite interval $[0, T]$ to the state space $\Omega$, and that this mapping can be extended to the full space as $t \to \infty$. Though we may not find a closed form solution for $B$, the method of successive approximations gives a solution in the form of a convergent series of repeated applications of the operator $K$ to the initial cohort which defines $\Phi$. In this way, each new application of $K$ corresponds to the next generation.
\begin{theorem}\label{Btheorem}
    There exists a unique, continuous and bounded solution $B$ to (\ref{renewNoJ}).
\end{theorem}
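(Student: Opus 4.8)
The plan is to recast the renewal equation (\ref{renewNoJ}) as the affine fixed-point problem $B = \Phi + K(B)$ on a suitable Banach space and to run the method of successive approximations, identifying the fixed point with the sum of the Neumann-type series $B = \sum_{m\ge 0} K^m\Phi$; convergence will follow by showing that a high enough iterate of $K$ is a contraction, which is the classical mechanism for Volterra-type kernels.

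First I would fix $T>0$ and work in the space $X_T$ of bounded continuous functions on $[0,T]\times\Omega$ with the norm $\|B\|_{X_T}=\sup_{0\le t\le T}\,\sup_{\bar{\mathbf{x}}\in\Omega}|B(t,\bar{\mathbf{x}})|$ (an $L^1$-in-$\bar{\mathbf{x}}$ variant works equally well). Using that $\phi$ is admissible initial data, that the loss exponential is bounded by $1$ because $\mu\ge 0$, that $\beta_1$ is bounded, and the smoothness and invertibility of the characteristic maps $\bar{\mathbf{Y}}(\cdot,\cdot)$, $F_i^{-1}$ and of the Jacobian $J$ established in Section \ref{sec3}, one checks that $\Phi\in X_T$ and that $K$ maps $X_T$ into itself as a bounded linear operator of Volterra type, with the pointwise estimate $\|K(B_1)(t,\cdot)-K(B_2)(t,\cdot)\|\le M\int_0^t\|B_1(s,\cdot)-B_2(s,\cdot)\|\,ds$ for a constant $M$ depending only on $\sup\beta_1$, the velocity data, and $T$.

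Next I would iterate this inequality: an induction on $m$ gives $\|K^m(B_1)(t,\cdot)-K^m(B_2)(t,\cdot)\|\le \tfrac{(Mt)^m}{m!}\,\|B_1-B_2\|_{X_T}$, so $\|K^m\|_{X_T\to X_T}\le (MT)^m/m!\to 0$. Choosing $m$ so that $(MT)^m/m!<1$, the map $B\mapsto\Phi+K(B)$ has a contracting $m$-th iterate, hence by the Banach fixed-point theorem a unique fixed point $B\in X_T$, which is the unique continuous, bounded solution of (\ref{renewNoJ}) on $[0,T]$; this $B$ is exactly the limit of the successive approximations $B^{(0)}=\Phi$, $B^{(j+1)}=\Phi+K(B^{(j)})$. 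Since $T$ was arbitrary and uniqueness forces the solutions on $[0,T_1]$ and $[0,T_2]$ to agree on their common interval, these local solutions patch into a single solution on $[0,\infty)$; a Gr\"onwall argument applied to the same Volterra inequality yields the a priori bound $\|B(t,\cdot)\|\le Ce^{\omega t}$, which is the appropriate sense in which $B$ is ``bounded'' on the unbounded time axis (and reduces to a genuine bound on any finite horizon).

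The main obstacle I anticipate is not the abstract fixed-point machinery but verifying that $K$ is genuinely a bounded operator on the chosen space in the presence of two model features: the state space carries the noncompact factor $\mathbb{R}_+^{k-1}$, and the biologically motivated birth modulus (\ref{birth_modulus}) is a Dirac measure rather than a bounded function. For the latter, $\beta$ must be read in the integrated sense, so that $\int_\Omega\beta(a,\bar{\mathbf{Y}}(a,\bar{\mathbf{y}}),\bar{\mathbf{x}})B(t-a,\bar{\mathbf{y}})\,d\bar{\mathbf{y}}$ collapses to a pointwise evaluation of $B$ pulled back by the halving map $\bar{\mathbf{y}}\mapsto\tfrac12\bar{\mathbf{y}}$, weighted by $2\beta_1(a)$, and one must confirm this still defines a bounded map on $X_T$; for the former, the hypotheses that each $v_i$ is positive in the interior of $\Omega$ and vanishes on $\partial\Omega$ keep trajectories inside $\Omega$ and keep $J$ and the $F_i^{-1}$ uniformly controlled over the (finite) time intervals that appear. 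Once these structural bounds are secured, the contraction estimate and the continuation argument go through routinely, and Theorem \ref{Btheorem} follows.
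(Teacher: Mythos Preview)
Your proposal is correct and follows the same overall strategy as the paper: recast (\ref{renewNoJ}) as the affine fixed-point problem $B=\Phi+K(B)$ and solve it by successive approximations, identifying $B$ with the Neumann series $\sum_{m\ge0}K^m\Phi$. The one substantive difference is in the estimate on the iterates. The paper bounds $\|K^N(\Phi)\|$ by the geometric quantity $(T\tilde\beta\,\Omega_1)^N\tilde\beta\|\phi\|_{L^1}$, where $\tilde\beta$ is an upper bound on the birth modulus and $\Omega_1$ the state-space volume occupied by the initial cohort; this forces the short-time restriction $T\le 1/(\tilde\beta\,\Omega_1)$, and the extension to all time is handled informally in the discussion following the proof. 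You instead iterate the Volterra inequality to obtain the factorial bound $\|K^m\|\le(MT)^m/m!$, so some power of $K$ contracts on \emph{every} finite horizon and no short-time step or continuation argument is needed. Your route is the more standard treatment of linear Volterra renewal equations and is cleaner; the paper's is more elementary but less efficient. You are also more careful than the paper about one point it glosses over: the birth modulus (\ref{birth_modulus}) contains a Dirac measure and so does not literally admit a pointwise bound $\tilde\beta$; reading the $\bar{\mathbf{y}}$-integral in the integrated sense, as you do, is the right fix.
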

    
\begin{proof}
    Let $\Omega_1 \subset \Omega$ be the volume of state space occupied by the initial cohort, and $\Tilde{\beta}$ an upper bound on the birth modulus $\beta$. (Refer to the integral equations as presented in (\ref{renewJ}) to see clearly how the Jacobian increases $\Omega_1$ with each application of $K$.) The operator $K$ is continuous in $t$ and $\bar{\mathbf{x}}$, and its supremum norm is bounded by, $$\lvert \lvert K \rvert \rvert \leq t \Tilde{\beta} \Omega_1 \sup_{0 \leq \sigma \leq t} \lvert \lvert B \rvert \rvert.$$ Similarly, $\Phi$ is continuous in $t$ and $\bar{\mathbf{x}}$, and bounded by, $\lvert \lvert \Phi \rvert \rvert \leq \Tilde{\beta} \lvert \lvert \phi \rvert \rvert_{L^1}$. By the method of successive approximations, $$B = \Phi + K(\Phi) + K^2(\Phi) + \cdots = \sum_{N = 0}^\infty K^N(\Phi).$$ The series solution for $B$ converges for values of $t$ in the finite interval $[0, T]$ where $T \leq \frac{1}{\Tilde{\beta} \Omega_1}$. We require $T \leq \frac{1}{\Tilde{\beta} \Omega_1}$ to ensure that repeated applications of $K$, each representing the next newly born cohort, remain bounded as $$\lvert \lvert K^N(\Phi) \rvert \rvert \leq (T \Tilde{\beta} \Omega_1)^N \times \Tilde{\beta} \lvert \lvert \phi \rvert \rvert_{L^1}.$$ Since $B$ is the uniform limit of continuous functions, it is also continuous.\\ Assuming there are two solutions, $B_1$ and $B_2$, and inserting their difference $B_1 - B_2$ into the above inequality in place of $\Phi$ shows that the difference must be zero, and therefore the solution must be unique as well.
\end{proof}

We have shown that a unique and continuous solution $B$ to the integral equation \begin{equation} \label{Beqn}
    B(t, \bar{\mathbf{x}}) = K(B)(t, \bar{\mathbf{x}}) + \Phi(t, \bar{\mathbf{x}}),
\end{equation} 
exists, and can by approximated by a power series. The series converges to the solution $B$ on a closed time interval, however, the length of the interval grows with the addition of each new term, or generation. In other words, we can always find the distribution of newborn cells as the sum of the contribution, determined through $K$, from all of the previous cohorts, and the population will become infinitely large in infinite time. Nevertheless, the guaranteed existence and uniqueness of a solution $B$ allows us to extract valuable information about the long-term behaviour of the population---particularly, if a stable distribution is reached in age and among the structure variables.

An asymptotic solution describes the behavior of a system as time increases to infinity. Generally, there is a short, transient phase before the promised asymptotic behavior is realized, the challenge is in separating out the dominant behavior that will persist over time and showing that all other contributions quickly become negligible. The asymptotic solution is analogous to an equilibrium solution for a linear model in that, the population will continue to increase for all time (or go extinct) while the relative fraction of the total population in a given state remains constant.

If such a solution exists, evolution in time can be separated from the structural distribution yielding solutions $B(t, \bar{\mathbf{x}})$ of the form $$B(t, \bar{\mathbf{x}}) = e^{\lambda t} \psi(a, \bar{\mathbf{x}}).$$ In the following section, we will show that linear operator associated with the PDE (\ref{pde_model}) is the generator of a strongly continuous semigroup, and that the spectral properties of this operator determine conditions for a steady-state solution. This fully justifies our casting of (\ref{renewJ}) as an abstract renewal equation in Section \ref{sec5}.

\section{Model Associated Linear Operator and Semigroup} \label{sec4}
Resolving the solution at the age $a = 0$ boundary requires that solutions are guaranteed to be bounded in a specific way. Here we will show that the solution (\ref{n_soln}) generates a semigroup of linear operators, and introduce properties of the semigroup which guarantee well-posedness of the multi-structured model (\ref{pde_model}), as well the necessary boundedness.

\subsection{The Abstract Cauchy Problem}
Let $U$ be the Banach space $L^1(\mathbb{R}_+ \times \Omega)$. The time evolution of $n(t, a, \bar{\mathbf{x}})$ is described by a function mapping $t \in \mathbb{R}_+ \to n(t, \cdot, \cdot) \in U$, governed by the ACP, \begin{equation}\tag{ACP} \label{ACP}
    \frac{dn(t)}{dt} = \mathcal{A}n(t), \ \ n(0) = \phi
\end{equation} where $\mathcal{A}$ is a linear, closed, and generally unbounded (differential) operator with dense domain $\mathcal{D}(\mathcal{A}) \in U$ \cite{pazy2012semigroups}. The word \textit{abstract} in Abstract Cauchy Problem signifies that solutions are Banach space valued.

A function $n: \mathbb{R}_+ \to U$ is a solution of (\ref{ACP}) if it is continuously differentiable, takes on values in $\mathcal{D}(\mathcal{A})$, and satisfies the ACP. The ACP is \textit{well-posed} if for every initial state $\phi \in \mathcal{D}(\mathcal{A})$, there exists a unique solution with continuous dependence on $\phi$. Solutions of a well-posed ACP give rise to a family $\{S(t)\}$ of bounded linear operators on $U$, defined as the unique set of operators satisfying $n(t) = S(t)\phi$ \cite{greiner1988growth}. Finally, this family of operators $\{S(t)\}$ is a \textit{strongly-continuous semigroup}, meaning, it satisfies the following four defining properties: 
\begin{enumerate}
    \item $S(t)$ is a continuous mapping from $U$ into itself.
    \item $S(0) = I.$
    \item The semigroup property, $S(s)S(t)\phi = S(t+s)\phi$.
    \item Strong continuity, $\lim_{t \searrow 0} \lvert \lvert S(t)\phi - \phi\lvert \lvert  = 0.$
\end{enumerate}{}

On the other hand, each semigroup can be associated with a closed, densely defined operator $\mathcal{A}$ called the \textit{infinitesimal generator}, or simply the \textit{generator}, of $S$, defined \begin{equation}
    \mathcal{A}\phi = \lim_{t \searrow 0}\frac{1}{t}(S(t)\phi - \phi).
\end{equation}
The operator $\mathcal{A}$ uniquely determines the semigroup, and gives rise to a well-posed ACP \cite{pazy2012semigroups}. The relationship between the ACP, a semigroup, and its generator is summarized by the Well-Posedness Theorem \cite{neumann2002evolution, greiner1988growth}:

\begin{theorem}{Well-Posedness Theorem.}
    For a closed linear operator $\mathcal{A}$ with domain $\mathcal{D}(\mathcal{A})$ dense in a Banach space $U$, the following properties are equivalent: \begin{enumerate}
       \item The ACP defined on $U$ is well-posed.
       \item The operator $\mathcal{A}$ is the generator of a strongly-continuous semigroup $\{S(t)\}_{t \geq 0}$ on $U$, and classical solutions of the ACP are given by $n(t) = S(t)\phi$ for $\phi \in \mathcal{D}(\mathcal{A})$.
   \end{enumerate}{} 
\end{theorem}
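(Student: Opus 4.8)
The plan is to establish the two implications separately; the equivalence is really a repackaging of the standard correspondence between $C_0$-semigroups and their generators, specialized to the closed, densely defined operator $\mathcal{A}$ arising from \eqref{pde_model}.

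The direction $(2)\Rightarrow(1)$ is the routine one. Assuming $\mathcal{A}$ generates a strongly-continuous semigroup $\{S(t)\}$, I would check each clause of well-posedness. \emph{Existence}: for $\phi\in\mathcal{D}(\mathcal{A})$ the basic semigroup identities give that $t\mapsto S(t)\phi$ is continuously differentiable, remains in $\mathcal{D}(\mathcal{A})$, and satisfies $\frac{d}{dt}S(t)\phi=\mathcal{A}S(t)\phi=S(t)\mathcal{A}\phi$, so $n(t)=S(t)\phi$ is a classical solution of \eqref{ACP}. \emph{Uniqueness}: given any solution $u(\cdot)$, fix $t>0$ and differentiate $s\mapsto S(t-s)u(s)$ on $[0,t]$; the product rule together with $u'(s)=\mathcal{A}u(s)$ makes this derivative vanish identically, so $S(t)\phi=u(t)$. \emph{Continuous dependence}: the local bound $\|S(t)\|\le Me^{\omega t}$ gives $\|n(t)\|\le Me^{\omega t}\|\phi\|$ on compact time intervals, hence continuity of $\phi\mapsto n(\cdot)$.

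For $(1)\Rightarrow(2)$ I would, for each $\phi\in\mathcal{D}(\mathcal{A})$, let $n(\cdot\,;\phi)$ be the unique solution supplied by well-posedness and define $S(t)\phi:=n(t;\phi)$, a family of linear maps on $\mathcal{D}(\mathcal{A})$ (linearity from uniqueness). The identity property $S(0)=I$ is the initial condition; the semigroup law follows again from uniqueness, since $t\mapsto S(t+s)\phi$ and $t\mapsto S(t)(S(s)\phi)$ both solve \eqref{ACP} with the same datum $S(s)\phi\in\mathcal{D}(\mathcal{A})$ and hence coincide; and strong continuity at $0$ holds on $\mathcal{D}(\mathcal{A})$ because each solution is continuous with $n(0;\phi)=\phi$.

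The main obstacle is promoting $\{S(t)\}$ from these maps on the dense subspace $\mathcal{D}(\mathcal{A})$ to a $C_0$-semigroup of \emph{bounded} operators on all of $U$, with the exponential bound $\|S(t)\|\le Me^{\omega t}$. Here I would work on $\mathcal{D}(\mathcal{A})$ with the graph norm (a Banach space, since $\mathcal{A}$ is closed): continuity of $t\mapsto S(t)\phi$ on each $[0,T]$ yields pointwise bounds $\sup_{t\in[0,T]}\|S(t)\phi\|_U<\infty$, and the continuous-dependence clause — if necessary sharpened via the uniform boundedness principle — gives the uniform-in-$t$ estimate that makes each $S(t)$ bounded on $\mathcal{D}(\mathcal{A})$ in the $U$-norm. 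Each $S(t)$ then extends uniquely to $U$ by density, the submultiplicativity of $t\mapsto\sup_{0\le s\le t}\|S(s)\|$ forces at-most-exponential growth $\|S(t)\|\le Me^{\omega t}$, and the semigroup and strong-continuity properties pass to the closure by an $\varepsilon/3$ argument. Finally, identifying the generator of $\{S(t)\}$ as \emph{exactly} $\mathcal{A}$ (and not a proper extension) is a standard resolvent computation: for $\lambda>\omega$ set $R(\lambda)\psi=\int_0^\infty e^{-\lambda t}S(t)\psi\,dt$ and verify, using density of $\mathcal{D}(\mathcal{A})$ and closedness of $\mathcal{A}$, that $R(\lambda)=(\lambda-\mathcal{A})^{-1}$. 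I expect essentially all the analytic content to sit in the uniform-boundedness/exponential-bound step; everything else is bookkeeping with uniqueness and density.
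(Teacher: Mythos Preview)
The paper does not prove this theorem at all: it is stated as a known result and attributed to \cite{neumann2002evolution, greiner1988growth}, with no proof sketch given. Your proposal therefore goes well beyond what the paper does; there is nothing in the paper to compare it against.

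That said, your outline is a faithful account of the standard argument one finds in the cited references (and in Engel--Nagel or Pazy). The $(2)\Rightarrow(1)$ direction is handled correctly. For $(1)\Rightarrow(2)$, the one place to be careful is the step where you pass from continuous dependence to a uniform bound $\sup_{t\in[0,T]}\|S(t)\|<\infty$ on $\mathcal{D}(\mathcal{A})$ in the $U$-norm: the precise hypothesis you need is that $\phi_n\to 0$ in $U$ (with $\phi_n\in\mathcal{D}(\mathcal{A})$) forces $n(t;\phi_n)\to 0$ uniformly on compacts, which is exactly how well-posedness is usually phrased. With that in hand, the uniform boundedness principle applies and the rest of your program (extension by density, exponential bound via submultiplicativity, identification of the generator through the Laplace-transform resolvent) goes through as you describe. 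None of this is in the paper, so if you were aiming to match the paper's treatment, a one-line citation would suffice.
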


\subsection{Spectral Properties of the Generator}
While the ultimate goal is to solve (\ref{ACP}), it is the spectral properties of the generator $\mathcal{A}$ that give us an idea of the asymptotic behavior of solutions.  This is made clear by the Hille-Yosida Theorem, the central theorem of semigroup theory which distiguishes the generators of strongly continuous semigroups among the class of all linear operators. Before presenting the theorem as it appears in \cite{pazy2012semigroups}, we first state the following lemma and prove the claim that every strongly continuous semigroup is \textit{exponentially bounded} (which follows from the third semigroup property listed above). 

\begin{lemma}
    Let $S(t)$ be a strongly continuous semigroup, and let $\omega \in \mathbb{R}$, and $M \geq 1$ be constants. Then $S(t)$ satisfies $\lvert \lvert S(t)\lvert \lvert  \leq Me^{\omega t}$ for all $t \geq 0$.
    \begin{proof}
        Choose $M \geq 1$ such that $\lvert \lvert S(s)\lvert \lvert  \leq M$ for all $0 \leq s \leq 1$ and write $t \geq 0$ as $t = s + n$ for $n \in \mathbb{N}$ and $0 \leq s < 1$. Then, $$\lvert \lvert S(t)\lvert \lvert \ \leq \ \lvert \lvert S(s)\lvert \lvert \cdot\lvert \lvert S(1)\lvert \lvert ^n \ \leq \ M^{n+1} = Me^{n\ln(M)} \ \leq \ Me^{\omega t}$$ holds for $\omega := \ln(M)$ and each $t \geq 0$  \cite{engel2006short}.
    \end{proof}
\end{lemma}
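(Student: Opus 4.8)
The plan is to read the statement as an existence claim --- that one can \emph{choose} constants $M \ge 1$ and $\omega \in \mathbb{R}$ for which the bound holds --- and to construct them explicitly. The whole argument rests on a single nontrivial input: that $\|S(s)\|$ is uniformly bounded for $s$ in a neighbourhood of the origin. Once that is established, the functional equation $S(s+t) = S(s)S(t)$ upgrades it to the claimed exponential bound on all of $\mathbb{R}_+$ by a routine interval decomposition.

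\textbf{Step 1 (local uniform boundedness).} I would first show that there exist $\eta > 0$ and $M \ge 1$ with $\|S(s)\| \le M$ for all $s \in [0,\eta]$. Suppose not; then for each $n \in \mathbb{N}$ there is some $s_n \in [0,1/n]$ with $\|S(s_n)\| > n$, and in particular $s_n \searrow 0$. For each fixed $\phi \in U$, strong continuity gives $S(s_n)\phi \to \phi$, so the sequence $\{\|S(s_n)\phi\|\}_n$ is bounded. Since $U$ is a Banach space, the uniform boundedness principle then yields $\sup_n \|S(s_n)\| < \infty$, contradicting $\|S(s_n)\| > n$. Enlarging $M$ we may assume $M \ge \|S(0)\| = 1$; and by splitting $[0,1]$ into finitely many subintervals of length at most $\eta$ and using the semigroup property, we may replace $\eta$ by $1$ at the cost of a larger $M$, which is the form used below.

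\textbf{Step 2 (propagation).} For arbitrary $t \ge 0$, write $t = n + s$ with $n = \lfloor t \rfloor \in \mathbb{N}$ and $s \in [0,1)$. Iterating the semigroup property gives $S(t) = S(s)S(1)^n$, hence
\[
\|S(t)\| \le \|S(s)\|\,\|S(1)\|^{n} \le M^{n+1} = M\,e^{n\ln M} \le M\,e^{t\ln M},
\]
where the last inequality uses $n \le t$ together with $\ln M \ge 0$. Taking $\omega := \ln M$ finishes the proof; if one instead keeps a general $\eta$ in Step 1, the same computation produces $\omega = \eta^{-1}\ln M$.

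The only real obstacle is Step 1. Strong continuity is postulated only at $t = 0$, so one cannot simply invoke continuity of $t \mapsto \|S(t)\|$ on a compact interval; the Banach--Steinhaus argument is precisely what legitimizes the assertion $\sup_{0 \le s \le 1}\|S(s)\| < \infty$, and it is the one place where completeness of $U$ and the boundedness of each individual operator $S(s)$ are genuinely used. Everything afterward is bookkeeping with the functional equation $S(s+t) = S(s)S(t)$.
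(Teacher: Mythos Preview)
Your proof is correct and follows essentially the same approach as the paper: write $t = n + s$ with $n \in \mathbb{N}$, $0 \le s < 1$, use the semigroup property to get $\|S(t)\| \le \|S(s)\|\,\|S(1)\|^n \le M^{n+1}$, and set $\omega = \ln M$. The only difference is that the paper simply asserts the existence of $M \ge 1$ bounding $\|S(s)\|$ on $[0,1]$, whereas you supply the Banach--Steinhaus argument that actually justifies this step; in that sense your write-up is more complete.
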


\begin{theorem}[Hille-Yosida]
    Let $\mathcal{A}$ be a closed, and densely defined linear operator on a Banach space $U$. $\mathcal{A}$ is the generator of a strongly continuous semigroup $S(t)$ if and only if the half-line $(\omega, \infty)$ is contained in the resolvent set $\rho(\mathcal{A})$, and $$\lvert \lvert R(\lambda, \mathcal{A})^m\lvert \lvert  \leq \frac{M}{(Re(\lambda) - \omega)^m}, \ \ \forall \ \lambda > \omega, \ m \in \mathbb{N}.$$
\end{theorem}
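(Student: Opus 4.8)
The plan is to prove the two implications separately. The forward direction --- a generator satisfies the resolvent estimates --- is essentially a computation built on the Laplace-transform representation of the resolvent. The converse --- the estimates force $\mathcal{A}$ to be a generator --- is the substantive part, and I would handle it by the Yosida approximation.

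For necessity, suppose $\mathcal{A}$ generates a strongly continuous semigroup $S(t)$, so that by the preceding lemma $\| S(t)\| \le M e^{\omega t}$ for some $M \ge 1$. First I would show that for every $\lambda > \omega$ the improper Bochner integral $R_\lambda \phi := \int_0^\infty e^{-\lambda t} S(t)\phi\, dt$ converges absolutely and defines a bounded operator with $\| R_\lambda\| \le M/(\lambda - \omega)$; then, using the semigroup law together with the definition $\mathcal{A}\phi = \lim_{t\searrow 0} t^{-1}(S(t)\phi - \phi)$, verify that $(\lambda I - \mathcal{A}) R_\lambda = R_\lambda (\lambda I - \mathcal{A}) = I$ on the appropriate domains, so that $(\omega,\infty) \subset \rho(\mathcal{A})$ and $R_\lambda = R(\lambda,\mathcal{A})$. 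Differentiating under the integral sign $m-1$ times in $\lambda$ gives $R(\lambda,\mathcal{A})^m \phi = \frac{1}{(m-1)!}\int_0^\infty t^{m-1} e^{-\lambda t} S(t)\phi\, dt$, and combining $\| S(t)\| \le M e^{\omega t}$ with $\int_0^\infty t^{m-1} e^{-(\lambda - \omega)t}\, dt = (m-1)!\,(\lambda - \omega)^{-m}$ gives exactly the stated bound.

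For sufficiency I would assume the resolvent estimates and build the semigroup. Step one: $\lambda R(\lambda,\mathcal{A})\phi \to \phi$ as $\lambda \to \infty$ --- immediate on $\mathcal{D}(\mathcal{A})$ from $\lambda R(\lambda,\mathcal{A})\phi - \phi = R(\lambda,\mathcal{A})\mathcal{A}\phi$ and $\| R(\lambda,\mathcal{A})\| \le M/(\lambda-\omega)$, then extended to all of $U$ by density since $\| \lambda R(\lambda,\mathcal{A})\|$ is uniformly bounded. Step two: introduce the bounded Yosida approximants $\mathcal{A}_\lambda := \lambda \mathcal{A} R(\lambda,\mathcal{A}) = \lambda^2 R(\lambda,\mathcal{A}) - \lambda I$ and observe $\mathcal{A}_\lambda\phi \to \mathcal{A}\phi$ on $\mathcal{D}(\mathcal{A})$. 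Step three: set $S_\lambda(t) := e^{t\mathcal{A}_\lambda} = e^{-\lambda t}\sum_{m=0}^\infty \frac{(\lambda^2 t)^m}{m!} R(\lambda,\mathcal{A})^m$, and use $\| R(\lambda,\mathcal{A})^m\| \le M (\lambda - \omega)^{-m}$ to obtain the \emph{uniform} bound $\| S_\lambda(t)\| \le M e^{\lambda\omega t/(\lambda-\omega)}$, so that for any $\omega' > \omega$ one has $\| S_\lambda(t)\| \le M e^{\omega' t}$ once $\lambda$ is large enough. Step four: since $\mathcal{A}_\lambda$, $\mathcal{A}_\mu$, $S_\lambda$, $S_\mu$ all commute, $S_\lambda(t)\phi - S_\mu(t)\phi = \int_0^t S_\mu(t-s) S_\lambda(s)(\mathcal{A}_\lambda - \mathcal{A}_\mu)\phi\, ds$, whence $\| S_\lambda(t)\phi - S_\mu(t)\phi\| \le t M^2 e^{\omega' t}\| (\mathcal{A}_\lambda - \mathcal{A}_\mu)\phi\| \to 0$ on $\mathcal{D}(\mathcal{A})$, uniformly on compact $t$-intervals, and once more extend to all $\phi \in U$ by density. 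The limit $S(t)\phi := \lim_{\lambda\to\infty} S_\lambda(t)\phi$ then inherits the semigroup property, strong continuity, and $\| S(t)\| \le M e^{\omega t}$. Finally, to check that its generator $\mathcal{B}$ is $\mathcal{A}$: passing to the limit in $S_\lambda(t)\phi = \phi + \int_0^t S_\lambda(s)\mathcal{A}_\lambda\phi\, ds$ shows $\mathcal{B} \supseteq \mathcal{A}$; and since for $\lambda > \omega$ the operator $\lambda I - \mathcal{A}$ is surjective (being invertible) while $\lambda I - \mathcal{B}$ is injective (by the necessity direction applied to $S(t)$ itself), a standard argument forces $\mathcal{D}(\mathcal{B}) = \mathcal{D}(\mathcal{A})$, hence $\mathcal{A} = \mathcal{B}$.

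The main obstacle is the uniform exponential estimate on $S_\lambda(t) = e^{t\mathcal{A}_\lambda}$ in Step three. Because $\| \mathcal{A}_\lambda\|$ grows like $\lambda$, a bound using only the first-power resolvent estimate is useless --- it is precisely the full family $\| R(\lambda,\mathcal{A})^m\| \le M(\lambda-\omega)^{-m}$ that makes the exponential series summable with a constant independent of $\lambda$. (In the contraction case $M=1$, $\omega=0$, one can instead renorm $U$ so each $S_\lambda(t)$ becomes a contraction and this step is trivial; the general $M \ge 1$ version stated here is the Feller--Miyadera--Phillips refinement, where the power estimates cannot be avoided.)
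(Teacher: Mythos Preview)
Your proposal is correct and follows the classical Yosida-approximation proof of Hille--Yosida exactly as it appears in standard references (in particular Pazy, which the paper cites). Note, however, that the paper itself does \emph{not} prove this theorem: it is stated as a quoted result from \cite{pazy2012semigroups} and used as background, so there is no ``paper's own proof'' to compare against --- your write-up simply supplies the argument the paper takes for granted.
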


From the Hille-Yosida Theorem, we find that spectral properties of $\mathcal{A}$ signify the existence of a corresponding semigroup $S$, and therefore the associated ACP is well-posed \cite{van2012asymptotic}. The question is, which spectral properties of $\mathcal{A}$ will allow us to make conclusions about the asymptotic behavior of $S$, and thus the solutions of the ACP. Of particular interest are the \textit{spectrum of an operator}, the \textit{resolvent set}, the \textit{resolvent operator}, and the \textit{spectral radius} defined in Appendix \ref{appendix:definitions}.

The primary result from the spectral properties of the generator, $\mathcal{A}$, is that it allows us to establish a growth bound on the semigroup it generates, and therefore on solutions to the associated ACP. The uniform growth bound, $\omega_0(S)$, is defined as $$\omega_0(S) := \inf \{\omega \in \mathbb{R} : \exists M > 0 \text{ such that } \lvert \lvert S(t)\lvert \lvert  \leq Me^{\omega t}, \forall t \geq 0 \}.$$ From the Hille-Yosida Theorem, we have that the spectrum of the generator of a strongly continuous semigroup is always contained in some left half-plane, as in, the maximum real part of an element of the spectrum defines an infinite vertical boundary (an abscissa) and all other elements in the spectrum of the generator are contained in the half-plane to the left of this boundary. We can then define the spectral bound $s(\mathcal{A})$ by $$s(\mathcal{A}) := \sup \{Re(\lambda) : \lambda \in \sigma(\mathcal{A}) \}.$$ From the Perron-Frobenius Theorem for positive semigroups, we have that for a positive semigroup, $s(\mathcal{A})$ is always in the spectrum of $\mathcal{A}$ \cite{greiner1988growth}. 

The Spectral Mapping Theorem states that, for a linear operator $L$, and an analytic function $f$, $$\sigma(f(L)) = f(\sigma(L)).$$ The Taylor series for an exponential function is everywhere convergent, so that $\sigma(e^{t\mathcal{A}})$ is equal to $e^{t \sigma(\mathcal{A})}$. Therefore, $e^{t (\omega_0(S))} = r(e^{t\mathcal{A}}) = e^{t s(\mathcal{A})}$, from which we can conclude that $s(\mathcal{A}) \leq \omega_0(S)$, with equality if and only if $\mathcal{A}$ is bounded \cite{van2012asymptotic}.



\subsection{Semigroup Solution and ACP for the Multi-Structured Model}
The solution given in (\ref{n_soln}) forms a strongly-continuous semigroup of linear operators $\{S(t)\}_{t \geq 0}$ where, \begin{equation}\label{semigroupsolution}
    n(t, a, \bar{\mathbf{x}}) = (S(t)\phi)(a, \bar{\mathbf{x}}) = \begin{cases} \phi(a - t, \bar{\mathbf{X}}(-t, \bar{\mathbf{x}})) \Pi(t)J(t) \ \ \ \ \ \ \ \text{ for } t < a\\ n(t - a, 0, \bar{\mathbf{X}}(-a, \bar{\mathbf{x}})) \Pi(a) J(a) \ \ \text{ for } t > a.
    \end{cases}
\end{equation}

\begin{lemma}
    $\{S(t)\}$ is a strongly continuous semigroup on $U$.
\end{lemma}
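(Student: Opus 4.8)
The plan is to verify the four defining semigroup properties listed above directly from the explicit formula \eqref{semigroupsolution}, leaning on the structural facts established earlier for the characteristic curves, the survival probability $\Pi$, and the Jacobian $J$. First I would record the cocycle-type identities that the building blocks satisfy along characteristics: for the flow, $\bar{\mathbf{X}}(\theta_1+\theta_2,\bar{\mathbf{x}}) = \bar{\mathbf{X}}(\theta_1,\bar{\mathbf{X}}(\theta_2,\bar{\mathbf{x}}))$ (with the appropriate bookkeeping of the age argument $a$, since $A(\theta,a)=\theta+a$ shifts it), together with $\bar{\mathbf{X}}(0,\bar{\mathbf{x}})=\bar{\mathbf{x}}$; for the survival probability, the multiplicativity $\Pi(\theta_1+\theta_2;a,\bar{\mathbf{x}}) = \Pi(\theta_1;a+\theta_2,\bar{\mathbf{X}}(\theta_2,\bar{\mathbf{x}}))\,\Pi(\theta_2;a,\bar{\mathbf{x}})$ and $\Pi(0)=1$; and the analogous chain rule for the Jacobian $J(\theta_1+\theta_2) = J(\theta_1)\,J(\theta_2)$ (evaluated along the flow), with $J(0)=1$. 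These all follow from the additivity of the integrals in $\sigma$ defining $\Pi$ and $J$ and from uniqueness of solutions to the characteristic ODE system \eqref{ode_system}.

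With those identities in hand, property (2), $S(0)=I$, is immediate from $\bar{\mathbf{X}}(0,\bar{\mathbf{x}})=\bar{\mathbf{x}}$, $\Pi(0)=J(0)=1$ and the $t<a$ branch of \eqref{semigroupsolution}. For property (3), the semigroup law $S(s)S(t)\phi = S(s+t)\phi$, I would fix $(a,\bar{\mathbf{x}})$ and split into cases according to how $a$, $t$, and $s+t$ compare, using the two-branch definition at each stage: if $a>s+t$ we only ever invoke the ``initial data'' branch and the cocycle identities collapse $\Pi(t)J(t)$ applied after $\Pi(s)J(s)$, together with the composed flow, into $\Pi(s+t)J(s+t)$ evaluated at $\phi(a-s-t,\bar{\mathbf{X}}(-(s+t),\bar{\mathbf{x}}))$; if $a<t$ both evaluations fall in the ``boundary data'' branch and one checks consistency using $B(t,\cdot)=n(t,0,\cdot)$ and the same multiplicativity; the mixed case $t<a<s+t$ is where one application hits the initial-data branch and the next hits the boundary branch, and it is handled by evaluating $S(s)$ on the function $S(t)\phi$ and tracing the characteristic back to the $a=0$ axis at the correct intermediate time. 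Property (1), that each $S(t)$ maps $U=L^1(\mathbb{R}_+\times\Omega)$ continuously into itself, comes from linearity in $\phi$ plus an $L^1$ bound: changing variables $(a,\bar{\mathbf{x}})\mapsto(a-t,\bar{\mathbf{X}}(-t,\bar{\mathbf{x}}))$ (resp.\ the birth-boundary change of variables) turns the Jacobian factor $J$ into precisely the change-of-variables determinant — this is the content of the Jacobian identity proved in Appendix \ref{appendix:jacobian} — so the $J$ cancels and $\Pi\le 1$ gives $\|S(t)\phi\|_{L^1}\le\|\phi\|_{L^1}$ on the initial-data region, while the boundary-region contribution is controlled by Theorem \ref{Btheorem} which bounds $B$ in terms of $\phi$; boundedness then yields continuity by linearity.

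The main obstacle, and the step I would spend the most care on, is property (4), strong continuity $\lim_{t\searrow 0}\|S(t)\phi-\phi\|_{L^1}=0$. The subtlety is the discontinuity built into the piecewise formula at $a=t$: for small $t$, the set $\{a<t\}$ where the solution is governed by the birth boundary $B(t,\cdot)$ is a thin slab, but one still must show its $L^1$ mass vanishes as $t\searrow0$, which uses the boundedness of $B$ from Theorem \ref{Btheorem} (so the slab has $L^1$-measure $O(t)$). On the bulk region $\{a>t\}$ one uses that $\Pi(t)\to1$ and $J(t)\to1$ uniformly on compacta together with continuity of translation in $L^1$ — i.e.\ $\phi(a-t,\bar{\mathbf{X}}(-t,\bar{\mathbf{x}}))\to\phi(a,\bar{\mathbf{x}})$ in $L^1$ as $t\searrow0$ — which is standard for $L^1$ functions once one notes the flow map $(a,\bar{\mathbf{x}})\mapsto(a-t,\bar{\mathbf{X}}(-t,\bar{\mathbf{x}}))$ converges to the identity uniformly on compact subsets of the interior and is measure-preserving up to the (uniformly-to-$1$) factor $J$; a density argument, approximating $\phi$ by compactly supported continuous functions, completes it. I would organize the write-up as: (i) the cocycle lemmas for $\bar{\mathbf{X}},\Pi,J$; (ii) properties (1) and (2); (iii) the case analysis for (3); (iv) the slab estimate plus translation-continuity argument for (4).
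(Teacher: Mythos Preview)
Your overall plan matches the paper's: both verify the four defining properties directly from the explicit formula \eqref{semigroupsolution}, and the paper (Appendix \ref{appendix:semgiroupprops}) establishes the additivity identities for $\Pi$ and $J$ inline during the check of property (3), just as you propose to do via your cocycle lemmas.

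Where you diverge is in the level of care on three of the four properties. For property (1), the paper argues pointwise continuity of $(a,\bar{\mathbf{x}})\mapsto (S(t)\phi)(a,\bar{\mathbf{x}})$ via an $\varepsilon$--$\delta$ estimate assuming $\phi$ is continuously differentiable; you instead prove $L^1$-operator boundedness through the change-of-variables that cancels $J$ --- the more standard reading of ``continuous mapping from $U$ into itself,'' and one that applies to all of $L^1$ rather than only smooth data. For property (3), the paper writes out only the $a>t$ branch, while you propose the full three-way case split; the mixed and boundary cases you flag are genuinely needed for a complete argument. For property (4), the paper simply interchanges $\lim_{t\searrow 0}$ with the $L^1$-norm without justification, whereas your slab estimate on $\{a<t\}$ combined with $L^1$-continuity of translation and a density argument is the rigorous way to do this. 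In short, your proposal follows the same skeleton but is strictly more thorough than the paper's own proof.
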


\begin{proof}
    Provided in Appendix \ref{appendix:semgiroupprops}.
\end{proof}

While $\{S(t)\}_{t \geq 0}$ has several convenient properties by virtue of being a strongly-continuous semigroup, such as exponential boundedness, in order to make use of them, we must show that $\{S(t)\}_{t \geq 0}$ is \textit{generated} by the linear operator associated with the model (\ref{pde_model}). The linear operator $\mathcal{A}$ associated with the PDE in (\ref{pde_model}) is, 
\begin{equation}\label{aphi}
    \mathcal{A}\phi = -\left[\frac{\partial \phi}{\partial a} + \sum_{i=1}^k v_i \frac{\partial \phi}{\partial x_i} + \phi \sum_{i=1}^k \frac{\partial v_i}{\partial x_i} + \mu \phi \right].
\end{equation}

Through this operator, the PDE model may be recast as the Abstract Cauchy Problem in the Banach space $U = L^1(\mathbb{R}_+ \times \Omega)$, for $n(t, \cdot, \cdot)$: \begin{equation} \label{ACPformodel}
        \frac{dn(t)}{dt} = \mathcal{A}n(t), \ \ n(0) = \phi.
\end{equation}

Furthermore, we have proven (in Appendix \ref{appendix:generatorproof}) that $\mathcal{A}$ is the infinitesimal generator of $\{S(t)\}_{t \geq 0}$ by showing that $$\mathcal{A}\phi = \lim_{t \searrow 0} \frac{1}{t} (S(t)\phi - \phi)$$ is satisfied for every $\phi$ in the domain of $\mathcal{A}$, $\mathcal{D}(\mathcal{A})$ \cite{engel2001one}. The Well-Posedness Theorem then guarantees that the ACP (\ref{ACPformodel}) is well-posed, and therefore $n(t, \cdot, \cdot) = S(t)\phi$ is the unique classical solution for $\phi \in \mathcal{D}(\mathcal{A})$.

\subsubsection{The Domain of the Generator}
The definition of the semigroup $S(t)$ and its generator $\mathcal{A}$ are incomplete without a definition of the domain, $\mathcal{D}(\mathcal{A})$. In the most general sense, the domain of a semigroup is a Banach space, or subset of a Banach space, where its generator is defined \cite{engel2006short}. For this problem, $\mathcal{D}(\mathcal{A})$, is the subset of all $\phi$ in the Banach space $L^1(\mathbb{R}_+ \times \Omega)$ satisfying the following two properties: \begin{enumerate}
        \item The boundary conditions must match. This means $$\lim_{t \to 0^+} \frac{1}{t} \int_0^t \int_{\Omega} \lvert \phi(a, \bar{\mathbf{x}}) - \hat{B}(0, \bar{\mathbf{x}}) \lvert d\bar{\mathbf{x}}da = 0$$ where $\displaystyle \hat{B}(t, \bar{\mathbf{x}}) = \int_0^\infty \int_{\Omega} \beta(a, \bar{\mathbf{y}}, \bar{\mathbf{x}})S(t)u(a, \bar{\mathbf{y}})d\bar{\mathbf{y}}da$.
        \item The derivative must remain in the domain. This means that there exists some $\phi' \in L^1(\mathbb{R}_+ \times \Omega)$ such that for every $\tau \in \mathbb{R}$, $$\phi(a - \tau, \bar{\mathbf{G}}^{-1}(\bar{\mathbf{G}}(\bar{\mathbf{x}}) - \tau)) - \phi(a, \bar{\mathbf{x}}) = \int_0^{\tau} \phi'(a - s, \bar{\mathbf{G}}^{-1}(\bar{\mathbf{G}}(\bar{\mathbf{x}}) - s))ds$$ where $\phi(a - \tau, \bar{\mathbf{G}}^{-1}(\bar{\mathbf{G}}(\bar{\mathbf{x}}) - \tau))$ is defined to be zero on $[0, -\tau) \times \Omega$ if $\tau < 0$. If this is satisfied, then the directional derivative $$D_{(1, \bar{\mathbf{v}})}\phi(a, \bar{\mathbf{x}}) = \lim_{\tau \to 0} \frac{1}{\tau} [\phi(a - \tau, \bar{\mathbf{G}}^{-1}(\bar{\mathbf{G}}(\bar{\mathbf{x}}) - \tau)) - \phi(a, \bar{\mathbf{x}})]$$ exists almost everywhere and is equal to $\phi'(a, \bar{\mathbf{x}})$
    \end{enumerate}{}
Under these conditions, we can restrict our domain to the Sobolev space $W^{1, 1}(\mathbb{R}_+ \times \Omega)$, giving us the correct balance of regularity and integrability \cite{hunter2001applied}.

\section{Abstract Renewal Equation} \label{sec5}
In this section, we seek an asymptotic solution to the model (\ref{pde_model}) by deriving and analyzing an abstract renewal equation based on integral equation (\ref{Beqn}) at the age $a = 0$ boundary. We have shown in the previous section that the model solution generates a strongly-continuous semigroup of linear operators, and therefore, we are justified in seeking a solution which maps points in time to locations in state space. Furthermore, the semigroup guarantees our solutions are exponentially bounded, which will guarantee convergence of the Laplace integrals we use to solve the renewal equation in its abstract form. In our analysis, the abstract renewal equation is reduced to an eigenproblem wherein time evolution is separated from the structural distribution yielding solutions of the form $$B(t, \bar{\mathbf{x}}) = e^{\lambda t} \psi(a, \bar{\mathbf{x}}),$$ where the constant $\lambda$ is the Malthusian growth parameter, intrinsic to the population itself. The following analysis is largely based off three fundamental sources: Heijmans \cite{MetzDiekmannLevin1986}, Tucker and Zimmerman \cite{tucker1988nonlinear}, and Bell and Anderson \cite{bell1967cell}.

\subsection{Reduction to an Abstract Renewal Equation}
Beginning with the integral equation (\ref{renewJ}) for $B(t, \bar{\mathbf{x}})$, we identify $\Phi(t, \bar{\mathbf{x}})$ and $B(t, \bar{\mathbf{x}})$ with their respective mappings $t \to \Phi(t, \cdot)$ and $t \to B(t, \cdot)$. Thus, we can express (\ref{renewJ}) as the abstract renewal equation, \begin{equation} \label{abstrRR}
    B(t) = \Phi(t) + \int_0^t K(a) B(t - a) da.
\end{equation}
where, for each \textit{fixed} $t$, $B(t)$ and $\Phi(t)$ are operators in the space $L_1(\Omega)$ in that $B(t, \bar{\mathbf{x}}) = (B(t))(\bar{\mathbf{x}})$, and similarly for $\Phi$. 

For each \textit{fixed} $t$, $K(t)$ defines a bounded linear operator from $L_1(\Omega) \to L_1(\Omega)$. Let $\psi(\bar{\mathbf{x}})$ be an arbitrary $L_1$-function on $\Omega$. Then, $(K(t) \psi)(\bar{\mathbf{x}})$ maps $\psi(\bar{\mathbf{x}}) \in L_1(\Omega)$ to another function in $L_1(\Omega)$ through multiplication and translation by the kernel $\mathcal{K}(t, \bar{\mathbf{x}})$. Inserting the birth modulus defined in (\ref{birth_modulus}), we can express the operator $K$ as, \begin{align} \begin{split}
    K(B)(t, \bar{\mathbf{x}}) = \int_0^t \int_\Omega &\beta(a, \bar{\mathbf{y}}, \bar{\mathbf{x}})B\left(t - a, \bar{\mathbf{Y}}(-a, \bar{\mathbf{y}})\right)\\ &\times \Pi(a; a, \bar{\mathbf{x}}) J(a; a, \bar{\mathbf{x}}) d \bar{\mathbf{y}} da\\
    = \int_0^t \int_\Omega &\beta_1(a) \chi_{\{\bar{\mathbf{y}} \in \Omega_r \}} \delta\left(\bar{\mathbf{x}} - \frac{1}{2}\bar{\mathbf{y}}\right) B\left(t - a, \bar{\mathbf{Y}}(-a, \bar{\mathbf{y}})\right)\\ &\times \Pi(a; a, \bar{\mathbf{x}}) J(a; a, \bar{\mathbf{x}}) d \bar{\mathbf{y}} da\\
    = \int_0^t &\beta_1(a) \chi_{\{\bar{\mathbf{y}} \in \Omega_r \}} B\left(t - a, \bar{\mathbf{Y}}(-a, 2\bar{\mathbf{y}})\right)\\ &\times \Pi(a; a, 2\bar{\mathbf{x}}) J(a; a, 2\bar{\mathbf{x}}) da\\
    = \int_0^t &\mathcal{K}(a, 2\bar{\mathbf{x}}) B(t - a, \bar{\mathbf{Y}}(-a, 2\bar{\mathbf{y}})) da.
\end{split}
\end{align}
We define $\mathcal{K}(a, \bar{\mathbf{x}})$ as $\displaystyle \mathcal{K}(a, \bar{\mathbf{x}}) :=  \beta_1(a) \Pi(a; a, \bar{\mathbf{x}}) J(a; a, \bar{\mathbf{x}}) \chi_{\{\bar{\mathbf{y}} \in \Omega_r \}}$, and see that, $$(K(t) \Psi)(\bar{\mathbf{x}}) = \mathcal{K}(t, 2\bar{\mathbf{x}})\Psi(\bar{\mathbf{X}}(-t,  2\bar{\mathbf{x}})) \in L_1(\Omega)$$ whenever $\Psi(\bar{\mathbf{X}}) \in L_1(\Omega)$.

\subsubsection{Domain and Range of Operators}
Here we present and discuss the continuity requirements and restrictions for each operator. The renewal equation is composed of three operators, $B(t, \bar{\mathbf{x}})$, $\Phi(t, \bar{\mathbf{x}})$, and $\mathcal{K}(a, \bar{\mathbf{x}})$, which functions as an integral kernel to be integrated against $B(a, \bar{\mathbf{x}})$ with respect to $a$ on $[0, t]$. Each of these continuous functions is non-negative, has compact support, and acts on the space $\mathbb{R}_+ \times \Omega$. 

\begin{lemma}
    For each $t \in [0, T]$, there is a unique, continuous mapping from the interval $[0, T]$ into $L_1(\Omega)$, such that $\Phi(t, \cdot)$ and $B(t, \cdot)$ satisfy the integral equation (\ref{renewJ}).
\end{lemma}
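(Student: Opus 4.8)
The plan is to read the scalar integral equation (\ref{renewJ}) as the Banach-space-valued Volterra (abstract renewal) equation (\ref{abstrRR}), namely $B(t) = \Phi(t) + \int_0^t K(a)B(t-a)\,da$, posed in $C([0,T];L_1(\Omega))$ with the norm $\|B\|_\infty = \sup_{0\le t\le T}\|B(t)\|_{L_1(\Omega)}$, and to solve it by the contraction-mapping principle — equivalently, by promoting the successive-approximations argument already used in Theorem \ref{Btheorem} to the level of mappings $t\mapsto B(t,\cdot)$. This lemma is really Theorem \ref{Btheorem} lifted to the abstract setting, so the work is in checking that the three pieces of (\ref{abstrRR}) behave well as operators on $L_1(\Omega)$.

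First I would verify the three ingredients. (i) For each fixed $a\in[0,T]$, $K(a)$ is a bounded linear operator on $L_1(\Omega)$: starting from the explicit representation $(K(a)\Psi)(\bar{\mathbf{x}}) = \mathcal{K}(a,2\bar{\mathbf{x}})\,\Psi(\bar{\mathbf{X}}(-a,2\bar{\mathbf{x}}))$ derived just above, one integrates $|(K(a)\Psi)(\bar{\mathbf{x}})|$ over $\Omega$, performs the dilation $\bar{\mathbf{z}}=2\bar{\mathbf{x}}$ (contributing a factor $2^k$) followed by the backward-flow change of variables $\bar{\mathbf{w}}=\bar{\mathbf{X}}(-a,\bar{\mathbf{z}})$ (whose Jacobian is the reciprocal of the $J(a)$ recorded in Section \ref{sec3}), so that the $J$-factor already present in $\mathcal{K}$ cancels and one is left with $\|K(a)\|_{\mathcal{L}(L_1)}\le C_K$ for a constant $C_K$ built from $2^k$, $\sup_a\beta_1(a)$, and the uniform bounds on $\Pi$ and $J$ over the compact supports; crucially $C_K$ is independent of $a\in[0,T]$. (ii) $a\mapsto K(a)$ is strongly continuous on $[0,T]$, by continuity of $\beta_1$, of $\Pi(a;a,\cdot)$, $J(a;a,\cdot)$ and of the flow $\bar{\mathbf{X}}(-a,\cdot)$ in $a$, together with strong continuity of translation in $L_1$. (iii) $\Phi\in C([0,T];L_1(\Omega))$ with $\|\Phi\|_\infty\le\tilde\beta\,\|\phi\|_{L_1}$ — the bound is exactly the one from the proof of Theorem \ref{Btheorem}, and continuity in $t$ follows from dominated convergence applied to the second integral of (\ref{renewJ}) using continuity of the characteristics.

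Next I would show the Volterra operator $(\mathcal{V}B)(t):=\Phi(t)+\int_0^t K(a)B(t-a)\,da$ maps $C([0,T];L_1(\Omega))$ into itself — continuity of $t\mapsto\int_0^t K(a)B(t-a)\,da$ is the standard consequence of (i), (ii) and uniform continuity of $B$ on the compact interval — and that a power of $\mathcal{V}$ is a contraction: from (i), $\|(\mathcal{V}^n B_1-\mathcal{V}^n B_2)(t)\|_{L_1}\le\frac{(C_K t)^n}{n!}\|B_1-B_2\|_\infty$, so $\mathcal{V}^n$ is a strict contraction for $n$ large, with no restriction on $T$. Banach's fixed point theorem then gives a unique $B\in C([0,T];L_1(\Omega))$ solving (\ref{abstrRR}), hence (\ref{renewJ}); the same estimate with $\Phi$ replaced by $B_1-B_2$ gives uniqueness directly, and since $B$ is the uniform limit of the continuous iterates $\mathcal{V}^n\Phi=\sum_{N=0}^{n}(\text{$N$-th generation term})$ it is itself continuous — this is precisely the series solution of Theorem \ref{Btheorem}, now read as a norm-convergent series in $C([0,T];L_1(\Omega))$. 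Because the contraction estimate is independent of $T$, the solution extends uniquely from $[0,T]$ to all of $[0,\infty)$ by the usual continuation argument, which is what legitimizes the Laplace-transform analysis in the next subsection.

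I expect the \emph{main obstacle} to be ingredient (i): making rigorous that $K(a)$ is a bounded operator on $L_1(\Omega)$ despite the Dirac delta in the birth modulus (\ref{birth_modulus}), with a bound uniform in $a$. The delta must be integrated against $\bar{\mathbf{x}}$ to collapse to the rescaling $\bar{\mathbf{y}}=2\bar{\mathbf{x}}$, and one must then carefully compound two changes of variables — the dilation and the backward characteristic map $\bar{\mathbf{X}}(-a,\cdot)$ — while keeping the image inside $\Omega$ (here the assumption that each $v_i$ vanishes on $\partial\Omega$ is essential) and while tracking that the Jacobian $J(a)$ in $\mathcal{K}$ is exactly cancelled; the ``growing volume $\Omega_1$'' bookkeeping of Theorem \ref{Btheorem} is nothing but this Jacobian factor, and the virtue of the $L_1$ formulation is that it is absorbed into the change of variables. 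A secondary subtlety is continuity of $a\mapsto K(a)$ near $a=0$, since the compact support of $\mathcal{K}(a,\cdot)$ degenerates as $a\to0$; this needs a short separate argument but does not affect the fixed-point scheme.
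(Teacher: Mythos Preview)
Your proposal is correct and considerably more thorough than the paper's own proof, which is essentially a two-line citation: the paper argues that $\Phi(t,\cdot)\in L_1(\Omega)$ varies continuously in $t$ simply because $\phi$ is assumed Lipschitz and compactly supported, and then disposes of $B$ entirely by writing ``The result for $B$ was proven in Theorem~\ref{Btheorem}.'' In other words, the paper treats this lemma as a corollary of the earlier successive-approximations argument and does not revisit the operator-theoretic details at all. You instead reprove the fixed-point argument directly in $C([0,T];L_1(\Omega))$, carefully checking that $K(a)$ is bounded on $L_1(\Omega)$ via the dilation-plus-flow change of variables, and you use the sharper Volterra estimate $\|\mathcal{V}^nB_1-\mathcal{V}^nB_2\|\le (C_Kt)^n/n!$ in place of the paper's cruder $(T\tilde\beta\Omega_1)^N$ bound. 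What this buys you is a contraction for \emph{every} $T$ without the restriction $T\le 1/(\tilde\beta\Omega_1)$ that the paper has to circumvent by continuation, and it makes explicit why the $L_1$ norm is natural (the Jacobian $J(a)$ in $\mathcal{K}$ is exactly cancelled by the change of variables). The paper's route is shorter because it outsources the work to Theorem~\ref{Btheorem}; yours is self-contained and actually supplies the $L_1(\Omega)$-valued continuity that the abstract renewal equation~(\ref{abstrRR}) and the subsequent Laplace analysis require.
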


\begin{proof}
    $\Phi(t, \bar{\mathbf{x}})$ is obtained by integrating the initial condition $\phi(a, \bar{\mathbf{x}})$ along characteristic curves for $a \in (0, \infty)$. Since $\phi$ is assumed to be a Lipschitz continuous, compactly supported function in $L_1(\mathbb{R}_+ \times \Omega)$, integration over all ages $a$ produces a continuous function in $L_1(\Omega)$. In this way, for each fixed $t \in [0, T]$, there exists a unique, continuous mapping $t \to \Phi(t, \bar{\mathbf{x}})$ from the interval $[0, T] \to L_1(\Omega)$, with $\Phi(t, \cdot)$ satisfying the integral equation (\ref{renewJ}).\\ The result for $B$ was proven in Theorem \ref{Btheorem}.
\end{proof}

Further, since $G(2x_1)$ is the time required for the smallest possible cell to reach a length of $2x_1$, we consider $\Phi(t, \bar{\mathbf{x}}) = 0$ if $t \geq G(2x_1)$, because any cell present at time $t = 0$ will necessarily have surpassed this length, and there will be no contribution of new cells of length $x_1$.

The integration kernel $\mathcal{K}(a, \bar{\mathbf{x}})$ is continuous in both $a$ and $\bar{\mathbf{x}}$. Continuity of $\mathcal{K}$ again guarantees the existence of a unique, continuous $L_1$-function $B$ that is the solution to (\ref{renewJ}). $\mathcal{K}(a, 2\bar{\mathbf{x}}) = 0$ if $\bar{\mathbf{x}} \not\in \Omega_r$, as these cells are too small to divide, and $\mathcal{K}(a, 2\bar{\mathbf{x}}) = 0$ if $a \geq G(2x_1)$ as, similar to $\Phi$, these cells are older than the time it takes to grow to size $2x_1$, and would therefore be too large to produce cells of size $x_1$.

\subsection{Associated Eigenproblem Derivation and Solution}
As is standard practice when solving renewal-type integral equations, we take the Laplace Transform of the abstract renewal equation and find that the operator $B(t)$ can be expressed as an inverse Laplace Transform. Analysis of this integral leads to an eigenproblem from which we arrive at the asymptotic solution. 

\subsubsection{Laplace Transform}
Taking the Laplace transform of both sides of the abstract renewal equation, \begin{equation*} \label{renewalcombo}
    \mathcal{L}\{B(t)\} = \mathcal{L}\left\{\Phi(t) + \int_0^t K(a)B(t-a)da \right \}
\end{equation*}{} yields, \begin{equation*}
    \hat{B}(\lambda) = \hat{\Phi}(\lambda) + \hat{K}(\lambda)\hat{B}(\lambda),
\end{equation*}{}
from which we find, $$\hat{B}(\lambda) = \left(I - \hat{K}(\lambda) \right)^{-1}\hat{\Phi}(\lambda)$$
The solution $B(t)$ is then the inverse Laplace transform of $\hat{B}(\lambda)$,
$$B(t) = \mathcal{L}^{-1}\{\hat{B}(\lambda)\} = \frac{1}{2 \pi i} \int_{c - i \infty}^{c + i \infty} e^{\lambda t}(I - \hat{K}(\lambda))^{-1}\hat{\Phi}(\lambda)d\lambda.$$
Cauchy's Residue Theorem says that the solution to this complex integral is the sum over the residues of $\hat{B}(\lambda)$ at each pole. Using properties of the semigroup solution $S(t)$, we will show that singularities of $\hat{B}(\lambda)$ only exist when $(I - \hat{K}(\lambda))^{-1}$ is singular, and that this a simple pole defined for a unique value of $\lambda$. 

\subsubsection{Derivation of the Eigenproblem}
We have shown previously that the solution to the model (\ref{pde_model}) generates a semigroup of linear operators. Crucially, the semigroup solution is exponentially bounded, and therefore its constituent operators $K(t)$ and $\Phi(t)$ are exponentially bounded as well. From the survival probability, we express the operator bound through a bound on $\mu$, the probability of cell loss. 

Let $$ \mu_\infty = \lim_{\sigma \to \infty} \mu(a + \sigma, \bar{\mathbf{X}}(\sigma, \bar{\mathbf{x}})) \leq d + 1 < \infty.$$ Then, $\displaystyle \lvert \lvert \Phi(t)\lvert \lvert  \leq M_1e^{-\mu_\infty t}$, and $\displaystyle \lvert \lvert K(t) \psi\lvert \lvert  \leq \lvert \lvert \psi\lvert \lvert  M_2e^{-\mu_\infty t}$. Therefore, $\hat{K}(\lambda)$ and $\hat{\Phi}(\lambda)$ are both analytic where they are defined, that is for all $\lambda$ in the right-half plane $$\Lambda := \{\lambda \in \mathbb{C} \lvert Re(\lambda) > - \mu_\infty \}.$$ The operators are not defined for $Re(\lambda) < -\mu_\infty $ as the exponent in the Laplace transform would become positive forcing the integral to diverge. 

As the operators $\hat{K}(\lambda)$ and $\hat{\Phi}(\lambda)$ are both analytic in $\Lambda$, the only singularities will arise when $1 \in \sigma(\hat{K}(\lambda))$, the spectrum of the Laplace transform of $K(t)$. Therefore, the long-term behavior of $B(t)$ will be determined by the element $\lambda \in \Lambda$ with the largest real part such that $I - \hat{K}(\lambda)$ is singular.

In what follows, we present a series of lemmas as laid out in Heijman's analysis of an age- and size-structured model appearing with proofs in \cite{MetzDiekmannLevin1986}. These results hold for the multi-structured model and together assert that there is one dominant eigenvalue $\lambda_0$, equal to the spectral radius of the semigroup $S(t)$, thus determining the long-term behavior of the system along with the corresponding eigenfunction $\psi_{\lambda_0}$.
\begin{lemma}
    For all $\lambda \in \Lambda$, $\hat{K}(\lambda)$ is compact.
\end{lemma}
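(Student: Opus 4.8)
The plan is to exhibit $\hat{K}(\lambda)$ as an operator whose image lies in a finite-dimensional (or at least strongly compact) subset of $L_1(\Omega)$, using the Dirac-delta structure of the birth modulus and the compact support of $\mathcal{K}$. Recall from the reduction above that $(K(t)\Psi)(\bar{\mathbf{x}}) = \mathcal{K}(t,2\bar{\mathbf{x}})\,\Psi(\bar{\mathbf{X}}(-t,2\bar{\mathbf{x}}))$, so that
\begin{equation*}
(\hat{K}(\lambda)\Psi)(\bar{\mathbf{x}}) = \int_0^\infty e^{-\lambda a}\,\mathcal{K}(a,2\bar{\mathbf{x}})\,\Psi\bigl(\bar{\mathbf{X}}(-a,2\bar{\mathbf{x}})\bigr)\,da .
\end{equation*}
First I would record the a priori bounds: since $\mathcal{K}$ is continuous, nonnegative, and compactly supported (it vanishes for $a \ge G(2x_1)$ and for $2\bar{\mathbf{x}} \notin \Omega_r$), and since $\mathrm{Re}(\lambda) > -\mu_\infty$ guarantees $|e^{-\lambda a}|$ is integrable against $\mathcal{K}$, the operator $\hat{K}(\lambda)$ is bounded on $L_1(\Omega)$; moreover its range consists of functions supported in the fixed compact set $\Omega_b \subset \Omega$ (the birth region), because $\mathcal{K}(a,2\bar{\mathbf{x}}) = 0$ unless $\bar{\mathbf{x}} \in \Omega_b$.

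Next I would establish compactness via the Kolmogorov–Riesz criterion in $L_1$: a bounded subset $\mathcal{F} \subset L_1(\Omega)$ is relatively compact iff it is uniformly supported (true here, support in $\Omega_b$) and \emph{equicontinuous in the mean}, i.e. $\sup_{f\in\mathcal F}\|f(\cdot + h) - f(\cdot)\|_{L_1} \to 0$ as $h \to 0$. So for a norm-bounded family $\{\Psi_j\}$ I would estimate $\|(\hat{K}(\lambda)\Psi_j)(\cdot + h) - (\hat{K}(\lambda)\Psi_j)(\cdot)\|_{L_1}$ by splitting the difference into (i) the variation of the prefactor $a \mapsto e^{-\lambda a}\mathcal{K}(a,2\bar{\mathbf{x}})$ in $\bar{\mathbf{x}}$, which is uniformly continuous on the compact support, hence controls that piece by $\|\Psi_j\|_{L_1}$ times a modulus of continuity independent of $j$; and (ii) the variation of the pulled-back argument $\bar{\mathbf{x}} \mapsto \bar{\mathbf{X}}(-a,2\bar{\mathbf{x}})$, which by the smoothness and nondegeneracy hypotheses on the velocities $v_i$ is a $C^1$ diffeomorphism with Jacobian bounded above and below on compacta, so a change of variables turns that term into $\|\Psi_j(\cdot + h') - \Psi_j(\cdot)\|_{L_1}$ with $|h'| \lesssim |h|$ — but this still depends on $\Psi_j$. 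To close this gap I would instead first replace each $\Psi_j$ by a mollification $\Psi_j^\varepsilon$, note $\hat{K}(\lambda)$ is bounded so $\|\hat{K}(\lambda)(\Psi_j - \Psi_j^\varepsilon)\|$ is small uniformly after passing through the density argument, and apply the equicontinuity estimate to the smooth pieces; alternatively, integrate by parts in $a$ using that $\Pi$, $J$, and $\beta_1$ are differentiable to move the $\bar{\mathbf{x}}$-translation onto the kernel entirely. Either route yields that $\hat{K}(\lambda)$ maps bounded sets to sets satisfying Kolmogorov–Riesz, hence to relatively compact sets.

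The main obstacle is precisely the translation-in-the-argument term (ii): unlike a classical integral operator with a kernel that is jointly continuous in $(\bar{\mathbf{x}},\bar{\mathbf{y}})$, here the delta function collapses the $\bar{\mathbf{y}}$-integration and leaves a composition operator $\Psi \mapsto \Psi \circ (\text{diffeo})$, which is \emph{not} compact by itself. Compactness must therefore come entirely from the $a$-integration, which averages the diffeomorphism over a continuum of times; making this averaging-induces-smoothing rigorous is the crux. I expect the cleanest implementation is the integration-by-parts one: write $e^{-\lambda a}\mathcal{K}(a,2\bar{\mathbf{x}}) = \partial_a H(a,\bar{\mathbf{x}})$ for a $C^1$ primitive $H$ vanishing at the endpoints of the support, so that $\hat{K}(\lambda)\Psi = -\int_0^\infty H(a,\bar{\mathbf{x}})\,\partial_a\!\left[\Psi(\bar{\mathbf{X}}(-a,2\bar{\mathbf{x}}))\right]da$ and then recognize $\partial_a \bar{\mathbf{X}}(-a,2\bar{\mathbf{x}}) = -\bar{\mathbf{v}}(\cdot)$, converting the $a$-derivative of the composition into a gradient that, after a further change of variables, lands as a derivative \emph{on $H$}. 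This exhibits $\hat{K}(\lambda)$ as (boundedly) mapping $L_1(\Omega)$ into $W^{1,1}$ of a bounded domain, and compactness then follows from the Rellich–Kondrachov embedding $W^{1,1}(\Omega') \hookrightarrow\hookrightarrow L_1(\Omega')$ on the bounded Lipschitz domain $\Omega' \supset \Omega_b$. I would present the argument in that order: boundedness and fixed compact support, the integration-by-parts identity exploiting differentiability of $\beta_1$, $\Pi$, $J$, the resulting $W^{1,1}$ bound, and finally invoke Rellich–Kondrachov.
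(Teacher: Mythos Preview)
The paper does not actually prove this lemma: it states it and defers to Heijmans' argument in \cite{MetzDiekmannLevin1986}, which is carried out for an age-and-size model whose state space is the \emph{compact} interval $[x_m,x_M]$. So there is no in-paper proof to compare to, only a reference whose hypotheses are strictly narrower than the multi-structured setting here.

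Your proposal has two concrete gaps. First, the set $\Omega_b=(x_m,x_M/2]\times\mathbb{R}_+^{k-1}$ is \emph{not} compact---it is bounded only in the size coordinate---so the claim that the range of $\hat K(\lambda)$ has ``fixed compact support'' is false, the tightness half of Kolmogorov--Riesz is not automatic, and Rellich--Kondrachov cannot be invoked on any bounded $\Omega'\supset\Omega_b$. This unboundedness in the extra structure variables is exactly the new feature of the multi-structured model over Heijmans' setting, and your sketch does not address it. (Relatedly, the mollification alternative you float is circular: boundedness of $\hat K(\lambda)$ gives only $\|\hat K(\lambda)(\Psi_j-\Psi_j^\varepsilon)\|\le C\|\Psi_j-\Psi_j^\varepsilon\|$, and for a merely bounded family $\{\Psi_j\}\subset L_1$ the mollification error $\|\Psi_j-\Psi_j^\varepsilon\|$ is \emph{not} uniformly small in $j$.)

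Second, the integration-by-parts route to $W^{1,1}$ does not close in dimension $k>1$. Integrating by parts in $a$ produces $\partial_a\bigl[\Psi(\bar{\mathbf X}(-a,2\bar{\mathbf x}))\bigr]=-\bar{\mathbf v}\cdot\nabla\Psi$, a \emph{single} directional derivative of $\Psi$. To land in $W^{1,1}(\Omega')$ you must control all $k$ partials $\partial_{x_i}(\hat K(\lambda)\Psi)$, and each of these involves the full gradient $\nabla\Psi$ through the flow Jacobian $\partial_{x_i}X_j(-a,2\bar{\mathbf x})$; one scalar relation cannot be traded for $k$ independent ones, and no ``further change of variables'' moves the full gradient onto the kernel. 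In one state dimension this swap works (and is essentially why Heijmans' proof goes through), but here the $a$-averaging smooths only along the flow direction. Compactness in the remaining $k-1$ directions would have to come from an additional mechanism---e.g., a tightness estimate exploiting the survival probability, or structural assumptions on the flow---which neither you nor the paper supplies.
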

Establishing that $\hat{K}(\lambda)$ is both analytic and compact for $\lambda \in \Lambda$ leads to an important conclusion about the inverse of $(I - \hat{K})$. From these two properties, we can show (via the Open Mapping Theorem) that $(I - \hat{K})$ is either nowhere invertible, or it has an inverse with a special property, and we call the inverse $(I - \hat{K})^{-1}$ \textit{meromorphic}. We know that $(I - \hat{K})$ is at least invertible for large values of $\lambda$ from the definition of the Laplace Transform\footnote{Since $K(t)$ is bounded, its Laplace Transform $\displaystyle \hat{K}(\lambda) = \int_0^\infty e^{-\lambda t}K(t)dt$ is also bounded for $\lambda \in \Lambda$. Taking the limit as $\lambda$ goes to infinity, we find that $\displaystyle \lim_{\lambda \to \infty} \lvert \lvert \hat{K}(\lambda) \rvert \rvert = \lim_{\lambda \to \infty} \lvert \lvert \int_0^\infty e^{-\lambda t}K(t)dt \rvert \rvert \leq \lim_{\lambda \to \infty} \int_0^\infty e^{-\lambda t} \lvert \lvert K(t) \rvert \rvert dt = 0.$ Therefore, eigenvalues of $(I - \hat{K})$ are bounded away from zero, and the operator is invertible, when $\lambda$ is large enough.}, justifying the claim of the following lemma. 

\begin{lemma}
    The function $\lambda \to (I - \hat{K}(\lambda))^{-1}$ is meromorphic in $\Lambda$.
\end{lemma}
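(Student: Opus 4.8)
The plan is to invoke the operator-valued \emph{analytic Fredholm theorem} (analytic Fredholm alternative), whose hypotheses are supplied precisely by the two preceding lemmas. The set $\Lambda = \{\lambda \in \mathbb{C} : Re(\lambda) > -\mu_\infty\}$ is open and connected; the map $\lambda \mapsto \hat{K}(\lambda)$ is analytic on $\Lambda$ in the operator-norm topology (see below); and $\hat{K}(\lambda)$ is a compact operator on $L_1(\Omega)$ for every $\lambda \in \Lambda$. The theorem then yields the dichotomy: either $I - \hat{K}(\lambda)$ is non-invertible for every $\lambda \in \Lambda$, or there is a set $D \subset \Lambda$ with no accumulation point in $\Lambda$ such that $(I - \hat{K}(\lambda))^{-1}$ exists and is analytic on $\Lambda \setminus D$ and has a pole with finite-rank principal part at each point of $D$ --- i.e., $\lambda \mapsto (I - \hat{K}(\lambda))^{-1}$ is meromorphic on $\Lambda$. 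To eliminate the first alternative I use the footnote estimate $\lvert\lvert \hat{K}(\lambda) \rvert\rvert \to 0$ as $Re(\lambda) \to \infty$: on the half-plane where $\lvert\lvert \hat{K}(\lambda) \rvert\rvert < 1$ the Neumann series $\sum_{n \geq 0} \hat{K}(\lambda)^n$ converges in norm to $(I - \hat{K}(\lambda))^{-1}$, so $I - \hat{K}$ is invertible somewhere in $\Lambda$ and the second alternative must hold.

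It is worth recording the local mechanism, since it exhibits the poles explicitly. Fix $\lambda_* \in \Lambda$. Using compactness of $\hat{K}(\lambda_*)$, pick a finite-rank operator $F$ with $\lvert\lvert \hat{K}(\lambda_*) - F \rvert\rvert < 1/2$; by norm-continuity $\lvert\lvert \hat{K}(\lambda) - F \rvert\rvert < 1$ on a neighborhood $\mathcal{N}$ of $\lambda_*$, so $R(\lambda) := (I - (\hat{K}(\lambda) - F))^{-1}$ exists and is analytic on $\mathcal{N}$ by the Neumann series. The factorization $I - \hat{K}(\lambda) = (I - (\hat{K}(\lambda) - F))(I - R(\lambda) F)$ reduces the problem to inverting $I - R(\lambda) F$, where $R(\lambda)F$ has fixed finite rank and is analytic in $\lambda$; its invertibility is governed by the scalar function $d(\lambda) := \det(I - R(\lambda)F)$, analytic on $\mathcal{N}$, and Cramer's rule expresses $(I - R(\lambda)F)^{-1}$ as an analytic operator divided by $d(\lambda)$, hence with poles of finite order at the zeros of $d$. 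Since $I - \hat{K}$ is invertible on a half-plane, $d$ cannot vanish identically; connectedness of $\Lambda$ together with the identity theorem then force the zero set $D$ to be discrete in $\Lambda$, and patching the local inverses produces a single meromorphic function on all of $\Lambda$.

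The main obstacle is not a computation but a point of care: the analytic Fredholm theorem requires $\lambda \mapsto \hat{K}(\lambda)$ to be holomorphic as a map into the Banach algebra of bounded operators on $L_1(\Omega)$ --- \emph{uniform} (norm) analyticity, not merely strong analyticity. This is where the exponential bound established via the semigroup enters: from $\lvert\lvert K(t)\psi \rvert\rvert \leq \lvert\lvert \psi \rvert\rvert M_2 e^{-\mu_\infty t}$ one gets $\lvert\lvert K(t) \rvert\rvert \leq M_2 e^{-\mu_\infty t}$, so for $Re(\lambda) > -\mu_\infty$ the integral $\hat{K}(\lambda) = \int_0^\infty e^{-\lambda t} K(t)\, dt$ converges absolutely in operator norm, and dominated convergence applied to the difference quotients $\frac{1}{h}(\hat{K}(\lambda + h) - \hat{K}(\lambda))$ shows the limit exists in norm and equals $-\int_0^\infty t e^{-\lambda t} K(t)\, dt$. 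With norm-analyticity and pointwise compactness of $\hat{K}$ in hand, the remaining steps are the standard Fredholm machinery, and (via the open mapping theorem) the inverse is automatically analytic wherever it exists.
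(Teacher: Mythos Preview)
Your proposal is correct and follows essentially the same route as the paper: analyticity plus compactness of $\hat{K}(\lambda)$ on the connected open set $\Lambda$ feed into the analytic Fredholm alternative, and the ``nowhere invertible'' branch is ruled out by the Neumann-series argument for large $Re(\lambda)$ (the paper's footnote estimate). Your additional care about norm versus strong analyticity and the explicit finite-rank/determinant mechanism are welcome elaborations, but the underlying strategy is identical to the one the paper sketches and attributes to Heijmans.
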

If the mapping from $\lambda$ to the operator $(I - \hat{K}(\lambda))^{-1}$ is meromorphic, the set $$\Sigma = \{\lambda \in \Lambda \lvert 1 \in \sigma(\hat{K}(\lambda))\}$$ is a discrete set whose elements are poles of $(I - \hat{K}(\lambda))^{-1}$ of finite order. 
\begin{lemma}
    If $\psi$ is an eigenfunction of $\hat{K}(\lambda)$, then $\psi(\bar{\mathbf{x}}) = 0$ for $\bar{\mathbf{x}} \not\in \Omega_r$.
\end{lemma}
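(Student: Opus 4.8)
The plan is to transfer the support restriction already recorded for the kernel $\mathcal{K}$ onto the eigenfunction $\psi$ by means of the eigenvalue relation. Suppose $\hat{K}(\lambda)\psi = \nu\psi$; the eigenvalues that enter the renewal analysis are nonzero (the decisive one being $\nu = 1$, the value appearing in the definition of $\Sigma$), so we may assume $\nu \neq 0$ and write $\psi = \nu^{-1}\hat{K}(\lambda)\psi$. It is then enough to show that $\hat{K}(\lambda)$ maps every element of $L_1(\Omega)$ to a function that vanishes almost everywhere on $\Omega \setminus \Omega_r$, since $\psi$ will inherit that property.

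To do this I would start from the explicit form of the operator obtained above, $(K(t)\psi)(\bar{\mathbf{x}}) = \mathcal{K}(t, 2\bar{\mathbf{x}})\,\psi(\bar{\mathbf{X}}(-t, 2\bar{\mathbf{x}}))$, with $\mathcal{K}(a, \bar{\mathbf{x}}) = \beta_1(a)\,\Pi(a;a,\bar{\mathbf{x}})\,J(a;a,\bar{\mathbf{x}})\,\chi_{\{\bar{\mathbf{y}} \in \Omega_r\}}$, and recall why the indicator factor is present: it is the trace left by the symmetric-division birth modulus $\beta(a,\bar{\mathbf{y}},\bar{\mathbf{x}}) = 2\beta_1(a)\,\delta(\bar{\mathbf{x}} - \frac{1}{2}\bar{\mathbf{y}})$ after the Dirac delta collapses the $\bar{\mathbf{y}}$-integration in $K$ onto the unique dividing state, which must be reproductive. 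This gives $\mathcal{K}(t, 2\bar{\mathbf{x}}) = 0$ for every $t \geq 0$ whenever $\bar{\mathbf{x}} \notin \Omega_r$ — precisely the fact already recorded just before this lemma. Multiplying by $e^{-\lambda t}$ and integrating over $t \in [0,\infty)$ — legitimate because the bound $\lvert\lvert K(t)\psi \rvert\rvert \leq \lvert\lvert \psi \rvert\rvert M_2 e^{-\mu_\infty t}$ makes the Laplace integral converge absolutely for $\lambda \in \Lambda$ — preserves the vanishing, so $(\hat{K}(\lambda)\psi)(\bar{\mathbf{x}}) = 0$ for a.e.\ $\bar{\mathbf{x}} \notin \Omega_r$.

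Combining the two observations, for almost every $\bar{\mathbf{x}} \in \Omega \setminus \Omega_r$ we get $\nu\,\psi(\bar{\mathbf{x}}) = (\hat{K}(\lambda)\psi)(\bar{\mathbf{x}}) = 0$, and since $\nu \neq 0$ this forces $\psi(\bar{\mathbf{x}}) = 0$ there, which is the assertion (read, as always for elements of $L_1(\Omega)$, up to a null set). I expect the only genuinely delicate step to be the middle one: tracking the change of variables induced by the Dirac delta in $\beta$ carefully enough — including the $2^k$ Jacobian produced by the rescaling $\bar{\mathbf{y}} \mapsto 2\bar{\mathbf{x}}$ — to be sure that the surviving indicator really cuts the support of $a \mapsto \mathcal{K}(a, 2\bar{\mathbf{x}})$ down to $\bar{\mathbf{x}} \in \Omega_r$ and not to some rescaled copy of it, together with the routine but necessary bookkeeping of the "almost everywhere" statements intrinsic to working in $L_1$. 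Everything else is an immediate consequence of the eigenvalue equation.
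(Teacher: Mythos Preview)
The paper does not actually prove this lemma in-text; it states that the sequence of lemmas in this subsection ``appear with proofs in'' the cited Heijmans chapter of Metz--Diekmann, so there is no in-paper argument to compare against. Your approach is correct and is the standard one: an eigenfunction with nonzero eigenvalue lies in the range of $\hat{K}(\lambda)$, and the range inherits the support restriction already recorded for the kernel $\mathcal{K}$. This is almost certainly the argument in the cited reference as well.

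Your caveat about the rescaling is well-placed and worth keeping. The indicator in $\mathcal{K}(a,2\bar{\mathbf{x}})$ literally reads $\chi_{\{2\bar{\mathbf{x}}\in\Omega_r\}}$, which cuts to $\bar{\mathbf{x}}\in\tfrac{1}{2}\Omega_r$ (essentially $\Omega_b$) rather than $\bar{\mathbf{x}}\in\Omega_r$; the paper's earlier sentence ``$\mathcal{K}(a,2\bar{\mathbf{x}})=0$ if $\bar{\mathbf{x}}\not\in\Omega_r$'' and the lemma as stated appear to conflate the two sets. Your method is indifferent to which set it is --- it delivers whatever support restriction the kernel carries --- so the proof goes through either way; just be explicit in your write-up about which set actually emerges from the computation.
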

From this lemma, we see that repeated applications of $\hat{K}(\lambda)$ are ultimately restricted to the subspace of $L_1(\Omega)$ spanned by $\psi$, and therefore, $\hat{K}(\lambda)$ restricted to this subspace is \textit{non-supporting}. If $\hat{K}(\lambda)$ is non-supporting, then the spectral radius $r = r(\hat{K}(\lambda))$, is a pole of the resolvent, $(\lambda I - \hat{K}(\lambda))^{-1}$, and an algebraically simple eigenvalue of $\hat{K}(\lambda)$.
\begin{lemma}
    The corresponding eigenvector $\psi_{\lambda}(\bar{\mathbf{x}}) > 0 \ \forall \ \bar{\mathbf{x}} \in \Omega_r$.
\end{lemma}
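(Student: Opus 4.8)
This is a Perron--Frobenius / Krein--Rutman statement: the eigenvector attached to the spectral radius of a non-supporting positive operator is strictly positive. The plan is to combine the positivity and non-supporting properties of $\hat{K}(\lambda)$ already in hand with the continuity of eigenfunctions to upgrade ``$\psi_\lambda \ge 0$, $\psi_\lambda \not\equiv 0$'' to ``$\psi_\lambda > 0$ everywhere on $\Omega_r$''. First I would record that for the relevant (real) value of $\lambda$ the operator $\hat{K}(\lambda) = \int_0^\infty e^{-\lambda t} K(t)\,dt$ is a \emph{positive} operator on $L_1(\Omega)$: its kernel $\mathcal{K}(a,\bar{\mathbf{x}}) = \beta_1(a)\,\Pi(a;a,\bar{\mathbf{x}})\,J(a;a,\bar{\mathbf{x}})\,\chi_{\{\bar{\mathbf{x}}\in\Omega_r\}}$ is non-negative (a product of the non-negative $\beta_1$, the strictly positive survival probability $\Pi$ and Jacobian $J$, and an indicator), and $e^{-\lambda t} > 0$, so $\hat{K}(\lambda)$ maps the positive cone $L_1^+(\Omega)$ into itself. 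Since $r = r(\hat{K}(\lambda))$ is the spectral radius of a positive compact operator (compactness being the earlier lemma), the Krein--Rutman theorem supplies an eigenvector $\psi_\lambda$ in the positive cone, $\psi_\lambda \ge 0$, $\psi_\lambda \not\equiv 0$; by the preceding lemma it vanishes outside $\Omega_r$, hence lies in $L_1^+(\Omega_r)$, a subspace left invariant by $\hat{K}(\lambda)$ on which it is non-supporting.

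Next I would invoke the structure theory of non-supporting operators (Sawashima / Niiro--Sawashima, and the positive-semigroup references already cited): for an irreducible (non-supporting) positive operator the eigenvector belonging to the spectral radius is a \emph{quasi-interior} point of the positive cone, and in $L_1(\Omega_r)$ a quasi-interior point is precisely a function strictly positive almost everywhere on $\Omega_r$. Concretely, if $\omega_0 = \{\bar{\mathbf{x}} \in \Omega_r : \psi_\lambda(\bar{\mathbf{x}}) = 0\}$ had positive measure, then the eigenrelation $r\,\psi_\lambda(\bar{\mathbf{x}}) = \int_0^\infty e^{-\lambda t}\,\mathcal{K}(t, 2\bar{\mathbf{x}})\,\psi_\lambda(\bar{\mathbf{X}}(-t, 2\bar{\mathbf{x}}))\,dt$ would force $\psi_\lambda$ to vanish along every backward characteristic-and-halving trajectory issuing from $\omega_0$ wherever $\beta_1$ is active; iterating $\hat{K}(\lambda)$ this produces a nontrivial closed $\hat{K}(\lambda)$-invariant ideal inside $L_1(\Omega_r)$, contradicting the non-supporting property established in the discussion following the previous lemma. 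Hence $\psi_\lambda > 0$ almost everywhere on $\Omega_r$.

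Finally, since eigenfunctions of $\hat{K}(\lambda)$ are continuous on $\Omega_r$ (the kernel $\mathcal{K}$ and the characteristic curves are continuous, and the renewal-type argument propagates continuity exactly as established above for $B$ and $\Phi$), a function that is continuous on $\Omega_r$ and strictly positive almost everywhere there is in fact strictly positive at every point, giving $\psi_\lambda(\bar{\mathbf{x}}) > 0$ for all $\bar{\mathbf{x}} \in \Omega_r$. I expect the main obstacle to be the middle step, namely making rigorous that the ``division halves the state, then growth along characteristics carries it back up'' dynamics genuinely connects all of $\Omega_r$ so that $\hat{K}(\lambda)$ admits no smaller invariant ideal; this irreducibility rests on the strict positivity of the interior velocity field and of $\beta_1$ on a nontrivial age interval, but it is precisely what the preceding lemma and the accompanying non-supporting remark already secure, so in this proof it need only be quoted rather than re-derived.
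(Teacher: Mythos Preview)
Your proposal is correct and follows the standard Krein--Rutman / non-supporting positive operator route. Note, however, that the paper does not supply its own proof of this lemma: it is one of the sequence of results explicitly lifted from Heijmans' treatment in \cite{MetzDiekmannLevin1986}, stated without proof and with the remark that ``these results hold for the multi-structured model''. Your argument is precisely the Perron--Frobenius-type reasoning that underlies Heijmans' original proof, so there is no substantive divergence---you have simply filled in what the paper chose to cite rather than reproduce.
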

Since the eigenvector $\psi_\lambda$ corresponding to this special value of $\lambda$ is guaranteed to be positive, we can conclude that $\lambda$ must be real-valued as well as unique.
\begin{lemma}
    There is a unique $\lambda_0 \in \Lambda \cap \mathbb{R}$ such that $r = 1$, and therefore $\lambda_0 \in \Sigma$.
\end{lemma}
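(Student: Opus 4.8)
The plan is to reduce the claim to a one-dimensional intermediate-value argument for the function $f(\lambda) := r(\hat{K}(\lambda))$ on the real half-line $\Lambda \cap \mathbb{R} = (-\mu_\infty, \infty)$. For real $\lambda$ the operator $\hat{K}(\lambda)$ has the non-negative kernel $e^{-\lambda a}\mathcal{K}(a, \cdot)$, hence is a positive operator on $L_1(\Omega)$; by the preceding lemmas it is compact, depends analytically on $\lambda$, and is non-supporting when restricted to the invariant subspace of functions vanishing off $\Omega_r$. The Perron--Frobenius (Krein--Rutman) theory for non-supporting compact positive operators then gives that $f(\lambda) = r(\hat{K}(\lambda)) > 0$ is an algebraically simple eigenvalue of $\hat{K}(\lambda)$ with a strictly positive eigenfunction; in particular $1 \in \sigma(\hat{K}(\lambda))$ precisely when $f(\lambda) = 1$, so that $\Sigma \cap \mathbb{R} = f^{-1}(\{1\})$ and it suffices to show that $f$ takes the value $1$ exactly once.

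First I would record the three properties of $f$ that make this work. \emph{Continuity:} since $\lambda \mapsto \hat{K}(\lambda)$ is norm-analytic on $\Lambda$ and $f(\lambda)$ is an isolated simple eigenvalue, Kato-type analytic perturbation theory makes $f$ continuous --- indeed real-analytic --- on $(-\mu_\infty, \infty)$. \emph{Strict monotonicity:} for $-\mu_\infty < \lambda_1 < \lambda_2$ one has $e^{-\lambda_2 a} < e^{-\lambda_1 a}$ on the (positive-measure) support of $\mathcal{K}$, hence $0 \le \hat{K}(\lambda_2) \le \hat{K}(\lambda_1)$ with strict domination there; monotonicity of the spectral radius for ordered positive operators, upgraded to strict inequality by the non-supporting property, gives $f(\lambda_2) < f(\lambda_1)$. \emph{Decay at $+\infty$:} the footnote bound $\lvert \lvert \hat{K}(\lambda) \rvert \rvert \to 0$ as $\lambda \to \infty$ yields $f(\lambda) \le \lvert \lvert \hat{K}(\lambda) \rvert \rvert \to 0$, so $f(\lambda) < 1$ for all large $\lambda$.

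The principal obstacle is the behaviour of $f$ at the left endpoint: I must show $f(\lambda) \to \infty$ as $\lambda \downarrow -\mu_\infty$, for then continuity and the intermediate value theorem produce a $\lambda_0 \in (-\mu_\infty, \infty)$ with $f(\lambda_0) = 1$, and strict monotonicity makes it unique. Writing $\mathcal{K}(a, \bar{\mathbf{x}}) = \beta_1(a)\,\Pi(a; a, \bar{\mathbf{x}})\,J(a; a, \bar{\mathbf{x}})\,\chi_{\{\bar{\mathbf{x}} \in \Omega_r\}}$, the survival probability $\Pi$ decays no faster than $e^{-\mu_\infty a}$ as $a \to \infty$ by the very definition of $\mu_\infty$ as the limiting loss rate, so testing $\hat{K}(\lambda)$ against a fixed positive function shows $\lvert \lvert \hat{K}(\lambda) \rvert \rvert$ is bounded below by a multiple of $\int_0^\infty \beta_1(a)\,J(a)\,e^{-(\lambda + \mu_\infty)a}\,da$, which diverges as $\lambda \downarrow -\mu_\infty$ provided $\beta_1 J$ does not itself decay --- and it does not, since the size cap $x_M$ forces eventual division. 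Hence $f(\lambda) \to \infty$. Finally, $f(\lambda_0) = 1$ means $1 = r(\hat{K}(\lambda_0)) \in \sigma(\hat{K}(\lambda_0))$ (the spectral radius of a positive operator always lies in its spectrum, and here it is even an eigenvalue), so $\lambda_0 \in \Sigma$, as required. I expect the left-endpoint blow-up, and pinning down exactly which hypotheses on $\beta_1$, $\Pi$ and $J$ guarantee it, to be the delicate point; the continuity and monotonicity steps are routine once the Perron--Frobenius structure supplied by the earlier lemmas is available.
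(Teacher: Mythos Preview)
The paper does not supply its own proof of this lemma: it explicitly states that these lemmas are ``laid out in Heijman's analysis of an age- and size-structured model appearing with proofs in \cite{MetzDiekmannLevin1986}'' and simply records the statement. Your proposal is precisely the classical Heijmans argument from that reference --- continuity and strict monotonicity of $\lambda \mapsto r(\hat K(\lambda))$ via Perron--Frobenius/Krein--Rutman theory for non-supporting positive compact operators, decay to zero at $+\infty$, blow-up at the left abscissa, and the intermediate value theorem --- so there is nothing to contrast. Your identification of the left-endpoint behaviour as the delicate step is accurate; in the cited treatment this is handled by an explicit hypothesis ensuring $r(\hat K(\lambda))$ exceeds $1$ near $-\mu_\infty$ rather than by a blow-up argument, and indeed your heuristic that $\beta_1 J$ ``does not decay'' would fail for the paper's own later choice $\beta_1(a) = \delta(a - a_*)$, so in a fully written-out proof you would want to impose that condition directly.
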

With this lemma, we can indeed identify the unique value $\lambda_0$ for which $(I - \hat{K}(\lambda))^{-1}$ becomes singular, and if, as claimed in the following lemma, this is the dominant element of $\sigma$, then $\lambda_0$ determines the long term behavior of $B(t)$.
\begin{lemma}
    All other $\lambda \in \Sigma$ have $Re(\lambda) < \lambda_0$.
\end{lemma}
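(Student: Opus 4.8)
The plan is to prove this last ``dominance of the principal eigenvalue'' statement by contradiction, exploiting the positivity of the kernel $\mathcal{K}$ and the non-supporting property secured in the preceding lemmas. Suppose $\lambda = \xi + i\tau \in \Sigma$ with $\xi = Re(\lambda) \geq \lambda_0$. Since $\hat{K}(\lambda)$ is compact (the earlier compactness lemma) and $1 \in \sigma(\hat{K}(\lambda))$, the Riesz--Schauder theory makes $1$ an eigenvalue, so there is $\psi \not\equiv 0$ with $\hat{K}(\lambda)\psi = \psi$. Using the explicit form $(\hat{K}(\lambda)\psi)(\bar{\mathbf{x}}) = \int_0^\infty e^{-\lambda a}\,\mathcal{K}(a, 2\bar{\mathbf{x}})\,\psi(\bar{\mathbf{X}}(-a, 2\bar{\mathbf{x}}))\,da$ together with $|e^{-\lambda a}| = e^{-\xi a}$ and $\mathcal{K} \geq 0$, the triangle inequality yields the pointwise domination $|\psi(\bar{\mathbf{x}})| = |(\hat{K}(\lambda)\psi)(\bar{\mathbf{x}})| \leq (\hat{K}(\xi)\,|\psi|)(\bar{\mathbf{x}})$ for a.e.\ $\bar{\mathbf{x}} \in \Omega$.

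First I would convert this domination into a statement about spectral radii. From $0 \leq |\psi| \leq \hat{K}(\xi)|\psi|$ with $|\psi|$ nonzero and $\hat{K}(\xi)$ a positive, compact, non-supporting operator, the standard Krein--Rutman/Perron--Frobenius comparison (iterate $\hat{K}(\xi)$ and pair against its positive adjoint eigenfunctional) gives $r(\hat{K}(\xi)) \geq 1$. On the other hand, for real arguments the kernels obey $e^{-\xi' a}\mathcal{K}(a,\cdot) \leq e^{-\xi a}\mathcal{K}(a,\cdot)$ with strict inequality on a set of positive measure whenever $\xi' > \xi$; since $\hat{K}(\cdot)$ is non-supporting, strict domination of the kernels forces strict domination of spectral radii, so $\xi \mapsto r(\hat{K}(\xi))$ is strictly decreasing on $\Lambda \cap \mathbb{R}$. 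Because $r(\hat{K}(\lambda_0)) = 1$ by the previous lemma, the assumption $\xi \geq \lambda_0$ gives $r(\hat{K}(\xi)) \leq 1$, and combined with $r(\hat{K}(\xi)) \geq 1$ we get $r(\hat{K}(\xi)) = 1$, hence $\xi = \lambda_0$ by injectivity. Thus every element of $\Sigma$ has real part at most $\lambda_0$, and it remains only to exclude $\lambda = \lambda_0 + i\tau$ with $\tau \neq 0$.

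For that final step I would extract rigidity from the equality case of the triangle inequality. Taking $\xi = \lambda_0$ above, $|\psi| \leq \hat{K}(\lambda_0)|\psi|$; since $1 = r(\hat{K}(\lambda_0))$ is an algebraically simple eigenvalue of the non-supporting operator $\hat{K}(\lambda_0)$ with strictly positive eigenfunction $\psi_{\lambda_0}$ (the two preceding lemmas), pairing against the strictly positive eigenfunctional of the adjoint $\hat{K}(\lambda_0)^{*}$ forces the inequality to be an equality, $|\psi| = \hat{K}(\lambda_0)|\psi|$, with $|\psi| = c\,\psi_{\lambda_0}$ for some $c > 0$. Writing $\psi(\bar{\mathbf{x}}) = c\,\psi_{\lambda_0}(\bar{\mathbf{x}})\,e^{i\theta(\bar{\mathbf{x}})}$ and substituting into $\hat{K}(\lambda)\psi = \psi$, equality in $\big|\int\cdots\big| = \int|\cdots|$ requires the argument of the integrand to be independent of $a$ on the support of $\mathcal{K}(\cdot, 2\bar{\mathbf{x}})$; that is, $\theta(\bar{\mathbf{X}}(-a, 2\bar{\mathbf{x}})) - \tau a$ must be constant in $a$ there, for a.e.\ $\bar{\mathbf{x}}$. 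But the support of $\mathcal{K}(\cdot, 2\bar{\mathbf{x}})$ in $a$ is an interval of positive length (contained in $[0, G(2x_1)]$ and trimmed only by the reproductive window), and $\theta$ is continuous along characteristics, so this is impossible unless $\tau = 0$. This contradiction shows $Re(\lambda) < \lambda_0$ for all $\lambda \in \Sigma \setminus \{\lambda_0\}$, which is the claim.

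I expect the phase-rigidity step to be the main obstacle: one must make precise that equality in the complex triangle inequality propagates consistently along the characteristic flow $\bar{\mathbf{X}}(-a,\cdot)$, and that connectedness of the age-support of $\mathcal{K}$ (the analogue of the non-lattice condition of classical renewal theory) genuinely rules out a nonzero imaginary part; here the specific structure of the model --- positivity, the non-supporting kernel, and the interval-shaped age window --- is essential. The spectral-comparison part is routine once the correct positive-operator theorems are quoted, but the \emph{strict} monotonicity of $\xi \mapsto r(\hat{K}(\xi))$ should be stated with care, since that is exactly what upgrades ``$Re(\lambda) \leq \lambda_0$'' to the strict inequality away from the real axis.
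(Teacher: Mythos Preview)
The paper does not supply its own proof of this lemma; it explicitly states that the sequence of lemmas is ``laid out in Heijmans' analysis \ldots\ appearing with proofs in \cite{MetzDiekmannLevin1986}.'' Your argument is precisely the classical Heijmans route --- triangle-inequality domination $|\psi| \leq \hat{K}(\xi)|\psi|$, Krein--Rutman/Perron--Frobenius comparison plus strict monotonicity of $\xi \mapsto r(\hat{K}(\xi))$ to force $\xi = \lambda_0$, and then equality-case phase rigidity to exclude $\tau \neq 0$ --- so you have reconstructed the cited proof rather than offered an alternative.

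Your self-flagged concern about the phase-rigidity step is on point, and it is worth noting that the paper itself runs into exactly this issue downstream: in the concrete application (Section~5.2.3) the authors specialize to $\beta_1(a) = \delta(a - a_*)$, a lattice-type kernel whose age-support is a single point, and in Section~6 they concede that the resulting behavior is periodic with period $a_*$ rather than genuinely convergent to a stable distribution. So the ``interval of positive length'' hypothesis you invoke for the support of $\mathcal{K}(\cdot, 2\bar{\mathbf{x}})$ is not a technicality but the substantive non-lattice condition, and it is \emph{not} satisfied in the paper's own worked example --- a tension the paper acknowledges but does not resolve within the lemma sequence.
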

Finally, we see that $\lambda_0$ is the unique eigenvalue which makes the spectral radius of $\hat{K}(\lambda)$ equal to one, and all other eigenvalues in the set $\Sigma$ are separated from $\lambda_0$ by some positive horizontal distance.
 
Therefore, solving the abstract renewal equation reduces to solving the characteristic equation for $\hat{K}(\lambda)$, for which we must find the value of $\lambda$, and its corresponding eigenvector $\psi_\lambda$, such that $(\hat{K}(\lambda)\psi)(\bar{\mathbf{x}}) = 1 \psi(\bar{\mathbf{x}}) = \psi(\bar{\mathbf{x}})$. 

\subsubsection{Solution to the Eigenproblem as Applied to a Cell Population}
We seek a solution to the renewal equation wherein the contribution from the initial condition becomes negligible on a large time frame, and the behavior of $B(t, \bar{\mathbf{x}})$ can be described in terms of its dominant eigenvalue and corresponding eigenfunction, as in $$B(t, \bar{\mathbf{x}}) = K(B)(t, \bar{\mathbf{x}}) + \Phi(t, \bar{\mathbf{x}}) \sim e^{\lambda t} \psi(\bar{\mathbf{x}}),$$ as $t \to \infty$. 
We have shown that $\lambda$ and $\psi$ are the unique solutions to the characteristic equation for $\hat{K}(\lambda)$. That is, \begin{align} \begin{split}
    (K(t) e^{\lambda t}\psi)(\bar{\mathbf{x}}) &= e^{\lambda t}\psi(\bar{\mathbf{x}}) = \int_0^t \mathcal{K}(a, 2\bar{\mathbf{x}}) B(t - a, \bar{\mathbf{X}}(-a, 2\bar{\mathbf{x}}))da\\
    &= e^{\lambda t}\psi(\bar{\mathbf{x}}) = \int_0^t e^{\lambda (t - a)} \mathcal{K}(a, 2\bar{\mathbf{x}}) \psi(\bar{\mathbf{X}}(-a, 2\bar{\mathbf{x}}))da\\
    & \Rightarrow \psi(\bar{\mathbf{x}}) = \int_0^t e^{-\lambda a} \mathcal{K}(a, 2\bar{\mathbf{x}}) \psi(\bar{\mathbf{X}}(-a, 2\bar{\mathbf{x}}))da\\
    \psi(\bar{\mathbf{x}}) &= \int_0^t e^{-\lambda a} \Pi(a) J(a) \beta_1(a)  \psi(\bar{\mathbf{X}}(-a, 2\bar{\mathbf{x}}))da.
\end{split} \end{align}

Here we have arrived at the eigenequation for $\lambda$ and $\psi$. Completing the solution for the cyanobacterial population application reqires specification of a division condition and growth rate. For simplicity, we will assume exponential growth with constant growth rate $\alpha$ so that, $$\frac{dx_1}{dt} = \alpha t,$$ and the characteristic curve $$X_1(t, x_1) = x_1 e^{t \alpha}.$$ We further assume that carboxysomes age at the rate of time passing, as in $\displaystyle \frac{dx_{i \neq 1}}{dt} = 1$, and the characteristic curves $$X_{i \neq 1}(t, x_{i \neq 1}) = t + x_{i \neq 1}.$$ The Jacobian $J(t)$ then reduces to $$J(t) = e^{-t \alpha}.$$

As a condition for division, we will require a cell to double in length, so that if a cell is of size $\bar{\mathbf{x}}$ at birth, it will divide upon reaching size $2\bar{\mathbf{x}}$. Under this division condition, and with a constant growth rate, all cells will divide upon reaching the same age, $a_*$. Therefore, $\beta_1(a)$ becomes $\delta(a - a_*)$. 

We have, \begin{align}
    \begin{split}
        \psi(\bar{\mathbf{x}}) &= \int_0^t e^{-\lambda a} \Pi(a) J(a) \beta_1(a)  \psi(\bar{\mathbf{X}}(-a, 2\bar{\mathbf{x}}))da\\
        \psi(\bar{\mathbf{x}}) &= \int_0^t 2^{\frac{1}{\alpha} + 1} e^{-a(d + 1 + \alpha + \lambda)} \psi(\bar{\mathbf{X}}(-a, 2\bar{\mathbf{x}}))da\\
    &= \int_0^t \left( 2^{\frac{1}{\alpha}}e^{-a} \right) 2 e^{-a(d + \alpha + \lambda)} \psi(\bar{\mathbf{X}}(-a, 2\bar{\mathbf{x}}))da
    \end{split}
\end{align}

The term in parenthesis is the contribution to the survival probability due to cell division, \begin{align*}
    \exp \left[-\int_0^a \mu(\sigma, \bar{\mathbf{Y}}(\sigma - a, \bar{\mathbf{y}}))d\sigma \right] &= e^{-ad} \exp \left[-\int_0^a b(\sigma, \bar{\mathbf{Y}}(\sigma - a, \bar{\mathbf{y}}))d\sigma \right]\\
    &= e^{-ad}2^{\frac{1}{\alpha}}e^{-a},
\end{align*}
meaning, it gives the probability at each age that a cell of a given state will not divide. However, because we know that each cell must divide at the exact same age, $a_* = \frac{1}{\alpha}\ln{2}$, we can change this probability to be zero everywhere and 1 (or infinite) upon reaching age $a_*$. That is to say, the probability of cell division a large $t$ is entirely concentrated at $a_*$ and so we can represent it with the Dirac delta function $\delta(a - a_*)$. Making this change in the above equation allows us to evaluate the integral and find a solution for the eigenvalue $\lambda$ and it's corresponding eigenfunction $\psi$. \begin{align} \begin{split}
    \psi(\bar{\mathbf{x}}) &= \int_0^t \left( 2^{\frac{1}{\alpha}}e^{-a} \right) 2 e^{-a(d + \alpha + \lambda)} \psi(\bar{\mathbf{X}}(-a, 2\bar{\mathbf{x}}))da\\
    &= \int_0^t \delta(a - a_*) 2 e^{-a(d + \alpha + \lambda)} \psi(\bar{\mathbf{X}}(-a, 2\bar{\mathbf{x}}))da\\
    &= 2 e^{-a_*(d + \alpha + \lambda)} \psi(\bar{\mathbf{X}}(-a_*, 2\bar{\mathbf{x}}))
\end{split} \end{align}
The Jacobian term $e^{-a_* \alpha}$ must remain with the characteristic curve to avoid creating an imbalance as cells grow. The eigenfunction $\psi$ will take the form of a $\delta$ function as it will act to ``pick up" cells of the appropriate state, e.g., cells of length $2x_1$. However, since every cell divides upon doubling in size and reaching age $a_*$, every cell-state is a solution in that, the characteristic curve on the right-hand-side describes a cell currently of length $2x_1$ that was of length $x_1$ at birth, a time $a_*$ ago, exactly the cell required to produce the one on the left-hand-side. Therefore, $\psi(\bar{\mathbf{x}}) = C\delta(\bar{\mathbf{x}})$, where $C$ is a constant. Since deaths are included in this model, the arbitrary constant $C$ will absorb the constant probability of cell death. Finally, $\psi(\bar{\mathbf{x}}) = e^{-da_*}\delta(\bar{\mathbf{x}})$. Solving for $\lambda$ here is equivalent to setting $$2 e^{-a_*\lambda} = 1 \Rightarrow \lambda = \frac{\ln{2}}{a_*} = \alpha.$$

Therefore, after a short transient phase, we can characterize cell renewal for large values of $t$ as $$B(t, \bar{\mathbf{x}}) \sim e^{\lambda t}\psi(\bar{\mathbf{x}}) = e^{\alpha t} \left(e^{-a_*d}\delta(\bar{\mathbf{x}}) )\right .$$

\section{Concluding Remarks} \label{sec6}
Motivated by an example from cyanobacterial photosynthesis, we have proposed a high-dimensional, multi-structured model for the microbe population dynamics. Conventionally, it is understood that numerical simulations of PDE models become highly challenging for systems with the cardinality of independent variables greater than six.\footnote{There are of course examples where specific high dimensional PDEs can be solved using highly customized methods \cite{HanJentzenE2018ProcNatlAcadSciUSA,EHanJentzen2022Nonlinearity}} Accordingly, analytical approaches are a good strategy for investigating this class of models.  To the best of our knowledge, our efforts here are the first attempt to resolve the asymptotic behavior of a model with a large number of finitely many structuring variables.

From our analysis, we conclude that for the model \eqref{pde_model}, the solution has a short, transient phase, followed by asymptotic growth according to, $$n(t, a, \bar{\mathbf{x}}) \sim Ce^{\alpha t}\delta(\bar{\mathbf{x}})$$ as time grows toward infinity. The asymptotic solution says that, given an initial distribution, we will observe a wave-front like behavior wherein the initial cohort of cells will grow to double in size, and then reappear as cells of age zero at the exact same state that they and their mother cells had at birth. 

For this application, we chose a constant growth rate and simple division condition, and found that all cells divide at the same age. As a result, every new generation, though larger in number, has the same distribution with respect to the structure variables as every previous generation. Rather than reaching a structural distribution which remains stable for all time, the distribution of this population is only stable in the sense that it is repeated after every period of length $a_*$. There is some debate as to whether this should be interpreted as a periodic solution, as claimed for Bell's size-structured model in his 1967 conclusion \cite{bell1967cell}, or if this behavior constitutes convergence to a stable distribution at all, per Heijman's rebuttal \cite{MetzDiekmannLevin1986}. Nevertheless, we have demonstrated that an asymptotic solution wherein evolution in time is separate from a predictable state-space behavior does exist for the multi-structured model.

This solution lays a framework for future investigation of structured population dynamics. For example, it has been proposed that a more accurate condition for cell division, which leads to convergence in cell length at birth, is one in which cells grow by a constant amount before they divide, as opposed to reaching some set division length or doubling in size \cite{campos2014constant}. Under this assumption on division, and with a constant growth rate, we can similarly resolve the asymptotic behavior of our system by expressing the division age $a_*$, the time it takes to grow by a prescribed constant length $\Delta L$, as a function of cell length at birth, $X_1(-a, x_1)$. We have that $G(x)$ is the time it takes to grow from the smallest possible size, $x_m$ to size $x$, therefore $a_*$ would be $a_* = G(X_1(-a, x_1) + \Delta L) - a$.

In future work, we will apply this model, and the methods used to find an asymptotic solution, to investigate the effect of aging on carboxysome productivity by changing the growth rate of cells to be carboxysome age-dependent. Changing the model in this way, or expanding it to include competition between cells, will introduce further complexity and nonlinearity to our model equations. However, the work presented here will serve as the linearized model upon which analysis of the nonlinear extension would rely.

\backmatter

%
%
%

\bmhead{Acknowledgments}
We would like to thank Professors Nancy Rodriguez and Zack Kilpatrick (Department of Applied Mathematics, University of Colorado, Boulder) for insightful comments and suggestions on an earlier draft of this work, and Nicholas Hill (Department of Biochemisty, University of Colorado, Boulder) for helpful discussions in the development of this model.


\section*{Declarations}
\subsection*{Funding}
This work was supported in part by a 2017 Renewable and Sustainable Energy Institute Seed Grant and in part by National Science Foundation Grant Number 2054085. 
This research was also supported in part by an appointment with the NSF Mathematical Sciences Graduate Internship (MSGI) program sponsored by the National Science Foundation, Division of Mathematical Sciences (DMS). This program is administered by the Oak Ridge Institute for Science and Education (ORISE) through an interagency agreement between the U.S.
Department of Energy (DOE) and NSF. ORISE is managed by ORAU under DOE contract number DE-SC0014664. All opinions expressed in this paper are the author's and do not necessarily reflect the policies and views of NSF, ORAU/ORISE, or DOE.

\subsection*{Competing Interests}
The authors declare no competing interests.

\subsection*{Ethics approval}
Not Applicable
\subsection*{Consent to participate}
Not Applicable
\subsection*{Consent for publication}
Not Applicable
\subsection*{Availability of data and materials}
Not Applicable
\subsection*{Code availability}
\subsection*{Authors' contributions}
SLA, JCC, and DMB jointly developed the model and the general idea of the paper.  SLA wrote the text and the proofs. SLA and DMB edited the entire manuscript.

\begin{appendices}

\section{Proof of Jacobian and Exponential Term Equivalence} \label{appendix:jacobian}
After integrating the PDE (\ref{pde_model}) along characteristic curves, we are are left with the exponential term $\displaystyle \exp\left[-{\int_0^{\theta} \sum_{i=1}^k \frac{\partial v_i}{\partial X_i}d\sigma}\right],$ which we claim is equal to the Jacobian determinant, $$\displaystyle J(s) = \left \lvert \frac{\partial(A(\theta, a), \bar{\mathbf{x}}(\theta,\bar{\mathbf{x}}))}{\partial (a, \bar{\mathbf{x}})} \right \rvert.$$ The following is a proof of this claim using Liouville's Formula \cite{annosov1997ordinary}.
\begin{quote}
    \textbf{Liouville's Formula.}\\ Consider the first-order, linear, homogeneous ODE $$\frac{d\mathbf{r}}{dt} = K(t)\mathbf{r}(t), \ \ \mathbf{r}(0) = \mathbf{r}_0, \ \ \mathbf{r} \in \mathbb{R}^n$$ with fundamental matrix solution $\mathbf{\Phi}(t)$ satisfying $$\mathbf{\Phi}'(t) = K(t)\mathbf{\Phi}(t), \ \ \mathbf{\Phi}(0) = I.$$ The determinant of $\mathbf{\Phi}(x)$ then satisfies the ODE, $$\frac{d}{dt}\det(\mathbf{\Phi}(t)) = Tr(K(t))\det(\mathbf{\Phi}(t)),$$ with solution, $$\det(\mathbf{\Phi}(t)) = \det(\mathbf{\Phi}(t_0))\exp\left[\int_{t_0}^t Tr(K(s))ds \right]$$ where $Tr(K)$ is the trace of $K$, the sum of its diagonal elements.
\end{quote}

To make the extension to the arbitrary $k$-dimensional model clear, first assume that $k = 2$ and let $x_1 = x$, $x_2 = y$, $\frac{dx}{dt} = g(a, x, y)$, and $\frac{dy}{dt} = f(a, x, y)$, so that our population distribution is given by $n(t, a, x, y)$.

This gives the following system of ODEs and solutions,
\begin{equation*}
    \begin{cases} \frac{da}{dt} = 1, \hspace{1.5cm} A(t, a) = a - t\\ \frac{dx}{dt} = g(a, x, y), \ \ X(t, x) = G^{-1}(G(x) - t), \ \ G(x) = \int_{x_m}^x \frac{ds}{g(a, s, y)}\\ \frac{dy}{dt} = f(a, x, y), \ \ Y(t, y) = F^{-1}(F(y) - t), \ \ F(y) = \int_{0}^y \frac{ds}{f(a, x, s)}. \end{cases}
\end{equation*}

Capital letters again denote \textit{solutions} to this system of ODEs, i.e., characteristic curves. That is, $\mathbf{X} = (A, X, Y)^T$ are all functions of $t$, but $\mathbf{x} = (a, x, y)^T$ are not. Denote by $\mathbf{v}$ the vector field $(1, g, f)$, and lastly, note that $A(0, a) = a, X(0, x) = x$, and $Y(0, y) = y$.
    
The matrix $K(t)$ from Liouville's Formula is $D_\mathbf{X}\mathbf{v}(t, \mathbf{X})$: \begin{equation*}
        K(t) = \begin{bmatrix}\frac{\partial (1)}{\partial A} & \frac{\partial (1)}{\partial X} & \frac{\partial (1)}{\partial Y} \\ \frac{\partial g}{\partial A} & \frac{\partial g}{\partial X} & \frac{\partial g}{\partial Y} \\ \frac{\partial f}{\partial A} & \frac{\partial f}{\partial X} & \frac{\partial f}{\partial Y} \\\end{bmatrix}_{(A, X, Y)} = \begin{bmatrix}0 & 0 & 0 \\ \frac{\partial g}{\partial A} & \frac{\partial g}{\partial X} & \frac{\partial g}{\partial Y} \\ \frac{\partial f}{\partial A} & \frac{\partial f}{\partial X} & \frac{\partial f}{\partial Y} \\\end{bmatrix}_{(A, X, Y)} 
    \end{equation*}
and the trace of $K(t)$ is $\frac{\partial g}{\partial X} + \frac{\partial f}{\partial Y} = \nabla \cdot \mathbf{v}$, the divergence of $\mathbf{v}$.
    
The fundamental solution $\mathbf{\Phi}(t)$ for the system $\mathbf{\Phi}'(t) = K(t)\mathbf{\Phi}(t)$ is $D_{\mathbf{x}}\mathbf{X}(t, \mathbf{x})$, the Jacobian matrix of the characteristic curves: \begin{equation*}
        \mathbf{\Phi}(t) = \begin{bmatrix}\frac{\partial A}{\partial a} & \frac{\partial A}{\partial x} & \frac{\partial A}{\partial y} \\ \frac{\partial X}{\partial a} & \frac{\partial X}{\partial x} & \frac{\partial X}{\partial y} \\ \frac{\partial Y}{\partial a} & \frac{\partial Y}{\partial x} & \frac{\partial Y}{\partial y} \\\end{bmatrix} = \begin{bmatrix}1 & 0 & 0 \\ 0 & \frac{g(a, X, y)}{g(a, x, y)} & 0 \\ 0 & 0 & \frac{f(a, x, Y)}{f(a, x, y)} \\\end{bmatrix}
    \end{equation*}
which is equal to the identity matrix $I$ when evaluated at $t =0$.
    
To be clear, the off-diagonal entries such as, $\frac{\partial X}{\partial a}$, are, in fact, zero. When we write $G(x) = \int_{x_m}^x \frac{1}{g(a, s, y)}ds$, this is purely a function of $x$ as $a$ and $y$ are understood to be fixed. Therefore, differentiating $X$ with respect to $a$ or $y$ is zero, and not $g(a, X, y)\int_0^x - \frac{\frac{\partial g}{\partial a}(a, s, y)}{(g(a, s, y))^2}$, what you would get if $G(x)$ were $G(a, x, y)$.
    
The determinant of this matrix is, $\frac{g(a, X, y)}{g(a, x, y)} \cdot \frac{f(a, x, Y)}{f(a, x, y)}$, consistent with Liouville's Formula. 
\begin{align*}
        \frac{d}{dt}\det(\mathbf{\Phi}(t)) &= \frac{d}{dt}\left[\frac{g(a, X, y)}{g(a, x, y)} \cdot \frac{f(a, x, Y)}{f(a, x, y)}\right]\\ &= \Bigg[ \frac{-g(a, X, y)\frac{\partial g}{\partial x}(a, X, y)}{g(a, x, y)} \cdot \frac{f(a, x, Y)}{f(a, x, y)}\\ & \ \ - \frac{-f(a, x, Y)\frac{\partial f}{\partial y}(a, x, Y)}{f(a, x, y)} \cdot \frac{g(a, X, y)}{g(a, x, y)} \Bigg] \\ &= -\left(\frac{\partial g}{\partial x}(a, X, y) + \frac{\partial f}{\partial y}(a, x, Y)\right)\left(\frac{g(a, X, y)}{g(a, x, y)} \cdot \frac{f(a, x, Y)}{f(a, x, y)}\right)\\ &= -tr(K(t))\det(\mathbf{\Phi}(t))
    \end{align*}
Applying Liouville's formula, we see that $$\det(\mathbf{\Phi}(t)) = \det(\mathbf{\Phi}(0))\exp\left[-\int_{0}^t Tr(K(s))ds \right] = \exp\left[-\int_{0}^t \nabla \cdot \mathbf{v}(A, X, Y) ds \right].$$ The extension to the arbitrary $k$ case is tedious but follows straightforwardly.

Secondarily, we claim that $J(s)$ is bounded. Conditions imposed on the velocity functions $v_i$ such as, boundedness, continuity, and regularity, are what make $J$ bounded. On the interior of $\Omega$, $v_i(a, X, y)/v_i(a, x, y)$ is bounded. The product of bounded functions is also bounded, and as a result, $J(s)$ is always bounded, and $\lvert \lvert J(s)\lvert \lvert _\infty = 1$.

\section{Definitions for the Spectrum of Linear Operators} \label{appendix:definitions}
These definitions are collected from \cite{hunter2001applied} and \cite{arino1992some}.

Let $L$ be a closed and bounded linear operator with domain $\mathcal{D}(L)$ dense in a Banach space $U$. \begin{quote}
    \textbf{Definition 1}. The \textit{resolvent set} of $L$, denoted $\rho(L)$, is the open set $$\rho(L) = \{\lambda \in \mathbb{C} : (L - \lambda I) \text{ is one-to-one and onto} \}.$$ The Open Mapping Theorem \cite{hunter2001applied} implies that $(L - \lambda I)^{-1}$ is bounded for $\lambda \in \rho(L)$.
\end{quote}

\begin{quote}
    \textbf{Definition 2}. The \textit{resolvent operator}, denoted by $R(\lambda, L)$ or $R_\lambda$, is the operator-valued function $$(L - \lambda I)^{-1}$$ defined only on the set $\rho(L)$.
\end{quote}

\begin{quote}
    \textbf{Definition 3}. The \textit{spectrum} of $L$, denoted $\sigma(L)$, is the closed set $$\sigma(L) = \mathbb{C} \setminus \rho(L) =  \{\lambda \in \mathbb{C} : (L - \lambda I) \text{ is not boundedly invertible}\}.$$ The spectrum is composed of three disjoint sets: $$\sigma(L) = \sigma_P(L) \cup \sigma_C(L) \cup \sigma_R(L)$$ \begin{itemize}
        \item The \textit{point spectrum} $$\sigma_P(L) = \{\lambda \in \sigma(L) : (L - \lambda I) \text{ is not one-to-one}\}.$$ These are the eigenvalues of $L$.
        \item The \textit{continuous spectrum} \begin{align*}
            \sigma_C(L) = \{\lambda \in \sigma(L) &: (L - \lambda I) \text{ is one-to-one but not onto,} \\ &\text{ and the range of } (L - \lambda I) \text{ is dense in } B\}.
        \end{align*}
        \item The \textit{residual spectrum} \begin{align*}
            \sigma_R(L) = \{\lambda \in \sigma(L) &: (L - \lambda I) \text{ is one-to-one but not onto,} \\ &\text{ and the range of } (L - \lambda I) \text{ is not dense in } B\}.
        \end{align*}
    \end{itemize}{}  
\end{quote}

\begin{quote}
    \textbf{Definition 4}. The \textit{spectral radius}, denoted $r(L)$, is a bound on $\sigma(L)$---the radius of smallest disk centered at zero containing $\sigma(L)$, $$r(L) = \sup \{\lvert \lambda\lvert  : \lambda \in \sigma(L) \}.$$ Note that if $N$ is a nilpotent operator, i.e., $\exists m : N^m = 0$, $\sigma(N) = \{0\}$ and therefore, $r(N) = 0$. The spectral radius can be thought of as a measure of the distance from $L$ to the set of nilpotent operators, and this is reflected in the formula $$r(L) = \lim_{m \to \infty}\lvert \lvert L^m\lvert \lvert ^{1/m}.$$
\end{quote}

\begin{quote}
    \textbf{Definition 5}. The essential spectrum, denoted $\sigma_{ess}(L)$, is the set of $\lambda \in \sigma(L)$ such that at least one of the following holds: \begin{enumerate}
        \item The range of $(L - \lambda I)$ is not closed.
        \item The generalized eigenspace associated with $(L - \lambda I)$ is infinite dimensional.
        \item $\lambda$ is a limit point of $\sigma(L)$.
    \end{enumerate}
\end{quote}

\section{Proof of Semigroup Properties} \label{appendix:semgiroupprops}
$S(t)$ satisfies the four defining properties of a strongly continuous semigroup.
\subsubsection*{1. $S(t)$ is a continuous mapping.} 
\begin{proof}
    Let $\varepsilon > 0$ and $\lvert \lvert \phi(a, \bar{\mathbf{x}}) - \phi(b, \bar{\mathbf{y}})\lvert \lvert  < \varepsilon$ where $\lvert \lvert  \cdot \lvert \lvert $ is the $L^1$-norm. Recall that $\phi$ is continuously differentiable. Then, \begin{flalign*} \lvert \lvert &(S(t)\phi)(a, \bar{\mathbf{x}}) - (S(t)\phi)(b, \bar{\mathbf{y}})\lvert \lvert  \\ 
    &= \lvert \lvert \phi(a-t, \bar{\mathbf{G}}^{-1}(\bar{\mathbf{G}}(\bar{\mathbf{x}}) - t))\Pi(t, -t)J(t) - \phi(b-t, \bar{\mathbf{G}}^{-1}(\bar{\mathbf{G}}(\bar{\mathbf{y}}) - t))\Pi(t, -t)J(t)\lvert \lvert \\ 
    &= \lvert \lvert [\phi(a-t, \bar{\mathbf{G}}^{-1}(\bar{\mathbf{G}}(\bar{\mathbf{x}}) - t)) - \phi(b-t, \bar{\mathbf{G}}^{-1}(\bar{\mathbf{G}}(\bar{\mathbf{y}}) - t))]\Pi(t, -t)J(t)\lvert \lvert \\ 
    &\leq \lvert \lvert \phi(a-t, \bar{\mathbf{G}}^{-1}(\bar{\mathbf{G}}(\bar{\mathbf{x}}) - t)) - \phi(b-t, \bar{\mathbf{G}}^{-1}(\bar{\mathbf{G}}(\bar{\mathbf{y}}) - t))\lvert \lvert \cdot\lvert \lvert \Pi(t, -t)\lvert \lvert _{\infty} \cdot \lvert \lvert J(t)\lvert \lvert _\infty\\ &= \lvert \lvert J(t)\lvert \lvert _\infty\lvert \lvert \phi(a-t, \bar{\mathbf{G}}^{-1}(\bar{\mathbf{G}}(\bar{\mathbf{x}}) - t)) - \phi(b-t, \bar{\mathbf{G}}^{-1}(\bar{\mathbf{G}}(\bar{\mathbf{y}}) - t))\lvert \lvert  && \end{flalign*}
Consider only the difference in $\phi$ for now. We will show that this difference is bounded in the age-and-size-structured case where $\phi(a, \bar{\mathbf{x}})$ becomes $\phi(a, x)$, a function of age and size only, with the growth rate $g$ for $v_1$. The following arguments can be extended naturally to the multi-structured model. \begin{flalign*}
    \lvert \lvert \phi(&a-t, G^{-1}(G(x - t)) - \phi(b-t, G^{-1}(G(y) - t))\lvert \lvert \\
    &= \Bigg\lvert \Bigg\lvert \phi(a, x) - t\Bigg(\frac{1}{G'(x)}\frac{\partial \phi}{\partial x}(a, x) + \frac{\partial \phi}{\partial a}(a, x)\Bigg)\\ &- \Bigg[\phi(b, y) - t\Bigg(\frac{1}{G'(y)}\frac{\partial \phi}{\partial y}(b, y) + \frac{\partial \phi}{\partial b}(b, x)\Bigg)\Bigg]\Bigg\lvert \Bigg\lvert \\ 
    &= \Bigg\lvert \Bigg\lvert \phi(a, x) - t\Bigg(g(a, x)\frac{\partial \phi}{\partial x}(a, x) + \frac{\partial \phi}{\partial a}(a, x)\Bigg)\\ &- \Bigg[\phi(b, y) - t\Bigg(g(b, y)\frac{\partial \phi}{\partial y}(b, y) + \frac{\partial \phi}{\partial b}(b, y)\Bigg)\Bigg]\Bigg\lvert \Bigg\lvert \\ 
    &\leq \lvert \lvert \phi(a, x) - \phi(b, y)\lvert \lvert  + \Bigg\lvert \Bigg\lvert t\Bigg[g(b, y)\frac{\partial \phi}{\partial y}(b, y) + \frac{\partial \phi}{\partial b}(b, y)\\ &- \Bigg(g(a, x)\frac{\partial \phi}{\partial x}(a, x) + \frac{\partial \phi}{\partial a}(a, x)\Bigg)\Bigg]\Bigg\lvert \Bigg\lvert \\ 
    &\leq \varepsilon + \Bigg\lvert \Bigg\lvert t\Bigg(g(b, y)\frac{\partial \phi}{\partial y}(b, y) - g(a, x)\frac{\partial \phi}{\partial x}(a, x)\Bigg)\Bigg\lvert \Bigg\lvert \\ &+ \Bigg\lvert \Bigg\lvert t\Bigg(\frac{\partial \phi}{\partial b}(b, y) - \frac{\partial \phi}{\partial a}(a, x)\Bigg)\Bigg\lvert \Bigg\lvert  \\ 
    &= \varepsilon + \Bigg\lvert \Bigg\lvert t\Bigg(g(b, y)\frac{\partial \phi}{\partial y}(b, y) - g(a, x)\frac{\partial \phi}{\partial x}(a, x)\Bigg)\Bigg\lvert \Bigg\lvert  + \lvert t\lvert \varepsilon_3 \\ 
    &= \varepsilon + \Bigg\lvert \Bigg\lvert t\Bigg(g(b, y)\frac{\partial \phi}{\partial y}(b, y) - g(a, x)\frac{\partial \phi}{\partial x}(a, x)\Bigg)\Bigg\lvert \Bigg\lvert  + \lvert t\lvert \varepsilon_3 \\ 
    &= \varepsilon + \lvert t\lvert \Bigg\lvert \Bigg\lvert g(b, y)\frac{\partial \phi}{\partial y}(b, y) - g(b, y)\frac{\partial \phi}{\partial x}(a, x)\\ &+ g(b, y)\frac{\partial \phi}{\partial x}(a, x) - g(a, x)\frac{\partial \phi}{\partial x}(a, x)\Bigg\lvert \Bigg\lvert  + \lvert t\lvert \varepsilon_3\\ 
    &= \varepsilon + \lvert t\lvert \Bigg\lvert \Bigg\lvert g(b, y)\Bigg(\frac{\partial \phi}{\partial y}(b, y) - \frac{\partial \phi}{\partial x}(a, x)\Bigg)\\ &+ \frac{\partial \phi}{\partial x}(a, x)\Bigg(g(b, y) - g(a, x)\Bigg)\Bigg\lvert \Bigg\lvert  + \lvert t\lvert \varepsilon_3 && \end{flalign*}
    
    \begin{flalign*}
   &\leq \varepsilon + \lvert t\lvert \Bigg\lvert \Bigg\lvert g(b, y)\Bigg(\frac{\partial \phi}{\partial y}(b, y) - \frac{\partial \phi}{\partial x}(a, x)\Bigg)\Bigg\lvert \Bigg\lvert \\ &+ \lvert t\lvert \Bigg\lvert \Bigg\lvert \frac{\partial \phi}{\partial x}(a, x)\Bigg(g(b, y) - g(a, x)\Bigg)\Bigg\lvert \Bigg\lvert  + \lvert t\lvert \varepsilon_3\\ 
   &\leq \varepsilon + \lvert t\lvert \Bigg(\lvert \lvert g(b, y)\lvert \lvert _\infty \Bigg\lvert \Bigg\lvert \frac{\partial \phi}{\partial y}(b, y) - \frac{\partial \phi}{\partial x}(a, x)\Bigg\lvert \Bigg\lvert \\ &+ \Bigg\lvert \Bigg\lvert \frac{\partial \phi}{\partial x}(a, x)\Bigg\lvert \Bigg\lvert _\infty \Bigg\lvert \Bigg\lvert g(b, y) - g(a, x)\Bigg\lvert \Bigg\lvert \Bigg) + \lvert t\lvert \varepsilon_3\\ 
    &\leq \varepsilon + \lvert t\lvert \Bigg(\varepsilon_a\lvert \lvert g(b, y)\lvert \lvert _\infty + \varepsilon_b\Bigg\lvert \Bigg\lvert \frac{\partial \phi}{\partial x}(a, x)\Bigg\lvert \Bigg\lvert _\infty \Bigg) + \lvert t\lvert \varepsilon_3\\ 
    &= \varepsilon + \lvert t\lvert (\varepsilon_2 + \varepsilon_3) && \end{flalign*} where $\varepsilon_a = \varepsilon_2/2\lvert \lvert g\lvert \lvert _\infty$, $\varepsilon_b = \varepsilon_2/2\lvert \lvert \partial \phi / \partial x\lvert \lvert _\infty$, and $\varepsilon_3$ comes from the fact that $\phi$ must be continuously differentiable. (You could also get $\varepsilon_2$ from the fact that the product of continuous functions are is continuous.)\\
    Finally, $$\lvert \lvert (S(t)\phi)(a, x) - (S(t)\phi)(b, y)\lvert \lvert  \leq \varepsilon + \lvert t\lvert (\varepsilon_2 + \varepsilon_3) = \delta(\varepsilon).$$ Since for every $\varepsilon$ such that $\lvert \lvert \phi(a, x) - \phi(b, y)\lvert \lvert  < \varepsilon$ there exists a $\delta(\varepsilon) > 0$ such that $\lvert \lvert (S(t)\phi)(a, x) - (S(t)\phi)(b, y)\lvert \lvert  \leq \delta$, the mapping is continuous.
\end{proof} 
    
\subsubsection*{2. $S(0) = I$.} \begin{proof}
        \begin{align*}
        (S(0)\phi)(a, \bar{\mathbf{x}}) &= \phi(a-0, \bar{\mathbf{G}}^{-1}(\bar{\mathbf{G}}(\bar{\mathbf{x}}) - 0))J(0)\\ 
        &\times \exp\left[-\int_0^{0} \mu(a - t', \bar{\mathbf{G}}^{-1}(\bar{\mathbf{G}}(\bar{\mathbf{x}}) - t'))dt' \right]\\ &= \phi(a, \bar{\mathbf{x}}) = (I\phi)(\bar{\mathbf{x}})\\ &\Rightarrow S(0) = I
    \end{align*}
    \end{proof} 
    
\subsubsection*{3. The semigroup property: $S(s)S(t)\phi = S(t + s)\phi$.} \begin{proof}
        \begin{align*}
        S(s)&[(S(t)\phi)](a, x) = (S(s)\phi)(a-t, \bar{\mathbf{G}}^{-1}(\bar{\mathbf{G}}(\bar{\mathbf{x}}) - t))\Pi(t, -t)J(t)\\ &= \phi((a-t)-s, \bar{\mathbf{G}}^{-1}((\bar{\mathbf{G}}(\bar{\mathbf{x}}) - t)-s))\Pi(t, -t)\\ &\times e^{\left[-\int_t^{s} \mu(a - t', \bar{\mathbf{G}}^{-1}(\bar{\mathbf{G}}(\bar{\mathbf{x}}) - t'))dt'\right]}J(t)e^{\int_t^s \nabla \cdot \bar{\mathbf{v}} dt'}\\ &= \phi(a-(t+s), \bar{\mathbf{G}}^{-1}(\bar{\mathbf{G}}(\bar{\mathbf{x}}) - (t+s))\Pi(t+s, -t)J(t+s)\\ &= (S(t+s)\phi)(a, \bar{\mathbf{x}})
    \end{align*}
    Where \begin{align*}
        \Pi(t, -t) &e^{\left[-\int_t^{s} \mu(a - t', \bar{\mathbf{G}}^{-1}(\bar{\mathbf{G}}(\bar{\mathbf{x}}) - t'))dt'\right]} = e^{\left[-\int_0^{t} \mu(a - t', \bar{\mathbf{G}}^{-1}(\bar{\mathbf{G}}(\bar{\mathbf{x}}) - t'))dt'\right]}\\ &\times e^{\left[-\int_t^{s} \mu(a - s', \bar{\mathbf{G}}^{-1}(\bar{\mathbf{G}}(\bar{\mathbf{x}}) - s'))ds'\right]}\\ &= \exp\left[-\left(\int_0^{t} \mu dt' + \int_t^{s} \mu dt'\right)\right]\\ &= \exp\left[-\int_0^{t+s} \mu(a - t', \bar{\mathbf{G}}^{-1}(\bar{\mathbf{G}}(\bar{\mathbf{x}}) - t')) dt'\right]\\ &= \Pi(t+s, -t) 
    \end{align*}
    and similarly for $J(t+s).$
\end{proof}
    
\subsubsection*{4. Strong continuity.} 
$\lim_{t \searrow 0} \lvert \lvert S(t)\phi - \phi\lvert \lvert  = 0$ where $\lvert \lvert  \cdot \lvert \lvert $ is the operator norm in the Banach space $U = L^1$. \begin{proof}
        \begin{align*}
        \lim_{t \searrow 0} \lvert \lvert S(t)\phi - \phi\lvert \lvert  &= \lim_{t \searrow 0} \lvert \lvert \phi(a-t, \bar{\mathbf{G}}^{-1}(\bar{\mathbf{G}}(\bar{\mathbf{x}}) - t))\Pi(t, -t)J(t) - \phi(a, x)\lvert \lvert \\ &= \lvert \lvert \lim_{t \searrow 0}\phi(a-t, \bar{\mathbf{G}}^{-1}(\bar{\mathbf{G}}(\bar{\mathbf{x}}) - t))\Pi(t, -t)J(t) - \phi(a, \bar{\mathbf{x}})\lvert \lvert \\ &= 0 
    \end{align*}
    Therefore, $\{S(t)\}_{t \geq 0}$ forms a strongly continuous semigroup on $L^1(\mathbb{R}_+ \times \Omega)$.
\end{proof} 

\section{Infinitesimal Generator Proof}
\label{appendix:generatorproof}

\begin{lemma}
    $\mathcal{A}$, as defined in Equation \eqref{aphi}, is the infinitesimal generator of the strongly-continuous semigroup $S(t)$.
\end{lemma}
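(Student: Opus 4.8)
The plan is to verify the defining limit $\mathcal{A}\phi = \lim_{t\searrow 0}\tfrac{1}{t}(S(t)\phi-\phi)$ directly in the norm of $U=L^1(\mathbb{R}_+\times\Omega)$ for every $\phi$ in the domain $\mathcal{D}(\mathcal{A})$ described in Section~\ref{sec4}, and then to run the estimates in reverse to see that no larger domain is admissible, so that $\mathcal{D}(\mathcal{A})$ is exactly that set. Fix such a $\phi$ and, for small $t>0$, split $\|\tfrac1t(S(t)\phi-\phi)-\mathcal{A}\phi\|_{L^1}$ over the regions $\{a>t\}$ and $\{a\le t\}$, on which \eqref{semigroupsolution} is given by the initial data and by the boundary term respectively; the first region should carry the whole limit and the second should contribute $o(1)$.

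On $\{a>t\}$ we have $(S(t)\phi)(a,\bar{\mathbf{x}}) = \phi(a-t,\bar{\mathbf{X}}(-t,\bar{\mathbf{x}}))\,\Pi(t)\,J(t)$ with $\bar{\mathbf{X}}(-t,\bar{\mathbf{x}})=\bar{\mathbf{G}}^{-1}(\bar{\mathbf{G}}(\bar{\mathbf{x}})-t)$, and I would decompose the difference quotient as
\begin{align*}
\tfrac1t\big[(S(t)\phi)(a,\bar{\mathbf{x}})-\phi(a,\bar{\mathbf{x}})\big]
&= \Big(\tfrac1t\big[\phi(a-t,\bar{\mathbf{X}}(-t,\bar{\mathbf{x}}))-\phi(a,\bar{\mathbf{x}})\big]\Big)\Pi(t)J(t)\\
&\quad+ \phi(a,\bar{\mathbf{x}})\,\tfrac{\Pi(t)J(t)-1}{t}.
\end{align*}
For the second summand, $\Pi(t)J(t)=\exp\!\big[-\!\int_0^t(\mu+\sum_i\partial v_i/\partial X_i)\,d\sigma\big]$ is $C^1$ in $t$ with value $1$ and derivative $-\big(\mu(a,\bar{\mathbf{x}})+\sum_i\partial v_i/\partial x_i(a,\bar{\mathbf{x}})\big)$ at $t=0$ (the divergence factor from Appendix~\ref{appendix:jacobian}), so $\tfrac{\Pi(t)J(t)-1}{t}$ converges uniformly on the compact support and, multiplied by $\phi\in L^1$, converges in $L^1$ to $-\big(\mu+\sum_i\partial v_i/\partial x_i\big)\phi$. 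For the first summand, $\Pi(t)J(t)\to1$ uniformly, and by domain condition~2 the bracket equals $\tfrac1t\int_0^t\phi'\!\big(a-s,\bar{\mathbf{G}}^{-1}(\bar{\mathbf{G}}(\bar{\mathbf{x}})-s)\big)\,ds$, which converges in $L^1$ to $\phi'(a,\bar{\mathbf{x}})=-\big(\partial_a\phi+\sum_i v_i\partial_{x_i}\phi\big)$ by $L^1$-continuity of translation along the characteristic flow (a Lebesgue-differentiation argument for the Bochner integral). Adding the two summands reproduces $\mathcal{A}\phi$ a.e.\ and in $L^1$ over $\{a>t\}$.

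On $\{a\le t\}$ we have $(S(t)\phi)(a,\bar{\mathbf{x}})=\hat B(t-a,\bar{\mathbf{X}}(-a,\bar{\mathbf{x}}))\,\Pi(a)\,J(a)$ with $\hat B$ as in domain condition~1. Because this strip has measure $O(t)$, the piece $\|\mathcal{A}\phi\|_{L^1(\{a\le t\})}$ vanishes as $t\searrow0$, so it is enough to show $\tfrac1t\int_0^t\!\!\int_\Omega\big|\hat B(t-a,\bar{\mathbf{X}}(-a,\bar{\mathbf{x}}))\Pi(a)J(a)-\phi(a,\bar{\mathbf{x}})\big|\,d\bar{\mathbf{x}}\,da\to0$; this I would get from the continuity of $\hat B$ in $t$ (established in Section~\ref{sec5}), from $\Pi(a)J(a)\to1$ uniformly as $a\searrow0$, and from the boundary-matching condition~1, which says precisely that the $a$-average of $|\phi(a,\cdot)-\hat B(0,\cdot)|$ near $a=0$ tends to zero. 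Combining the two regions gives $\lim_{t\searrow0}\tfrac1t(S(t)\phi-\phi)=\mathcal{A}\phi$ in $U$.

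For the reverse inclusion, note that $S(t)$ is already a strongly continuous semigroup and hence has a generator with some domain; running the above in reverse, existence of the $L^1$ limit forces the $\{a\le t\}$ strip to behave as described (yielding condition~1) and forces the characteristic difference quotients to converge in $L^1$ (yielding condition~2 and $\phi\in W^{1,1}(\mathbb{R}_+\times\Omega)$), so the generator's domain is exactly $\mathcal{D}(\mathcal{A})$. The step I expect to be the main obstacle is the first summand on $\{a>t\}$: upgrading a.e.\ differentiability of $\phi$ along the flow to genuine $L^1$-norm convergence of the quotients. I would settle this by first proving the identity for $\phi\in C^1_c$ (where Taylor expansion makes every limit transparent), then passing to general $\phi\in\mathcal{D}(\mathcal{A})$ by density together with the integral representation of condition~2, which writes the quotient as an average of a fixed $L^1$ function and so delivers the $L^1$ limit at once.
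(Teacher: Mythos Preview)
Your proposal is correct and in fact considerably more careful than the paper's own argument. The paper's proof in Appendix~\ref{appendix:generatorproof} works entirely on the branch $t<a$ of \eqref{semigroupsolution}, writes out $\Pi(t)J(t)$ explicitly, and then performs a formal Taylor expansion of every factor about $t=0^+$, reading off the coefficient of the first-order term as $-\big(\partial_a\phi+\sum_i v_i\partial_{x_i}\phi+\phi\sum_i\partial_{x_i}v_i+\mu\phi\big)=\mathcal{A}\phi$. It does not separately treat the strip $\{a\le t\}$, does not invoke the two domain conditions, and does not argue the reverse inclusion.

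Your route differs in that you split the integral over $\{a>t\}$ and $\{a\le t\}$, replace the Taylor expansion by the algebraic decomposition into the translation quotient times $\Pi(t)J(t)$ plus $\phi$ times $(\Pi(t)J(t)-1)/t$, and then use domain condition~2 to control the first summand in $L^1$ and domain condition~1 to kill the boundary strip. This buys you an honest $L^1$-norm limit rather than a pointwise/formal one, and it makes the role of the domain description transparent, including why the generator's domain is exactly $\mathcal{D}(\mathcal{A})$. The paper's approach buys brevity: once one accepts the Taylor step, the computation is two lines. Your identification of the main technical obstacle (upgrading the characteristic difference quotient to $L^1$ convergence) and your proposed fix via the integral representation in condition~2 are exactly right and are what the paper's argument leaves implicit.
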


\begin{proof}
    An operator $\mathcal{A}$ is the generator of a semigroup if $$\mathcal{A}\phi = \lim_{t \searrow 0}\frac{1}{t}(S(t)\phi - \phi)$$ for every $\phi \in \mathcal{D}(\mathcal{A}).$
    \begin{align}
        \begin{split}
            \lim_{t \searrow 0}\frac{1}{t}\left(S(t)\phi - \phi\right) &= \lim_{t \searrow 0}\frac{1}{t}\left(\phi(a-t, \bar{\mathbf{G}}^{-1}(\bar{\mathbf{G}}(\bar{\mathbf{x}}) - t))\Pi(t, -t)J(t) - \phi(a, \bar{\mathbf{x}})\right)\\
            =  &\lim_{t \searrow 0}\frac{1}{t}\Big(\phi(a-t, \bar{\mathbf{G}}^{-1}(\bar{\mathbf{G}}(\bar{\mathbf{x}}) - t))\exp\left[-\int_0^{t} \mu(a - t', \bar{\mathbf{G}}^{-1}(\bar{\mathbf{G}}(\bar{\mathbf{x}}) - t'))dt' \right]\\ &\cdot \exp\left[-\int_0^t \nabla \cdot \bar{\mathbf{v}}(a - t, \bar{\mathbf{G}}^{-1}(\bar{\mathbf{G}}(\bar{\mathbf{x}}) - t'))dt'\right] - \phi(a, \bar{\mathbf{x}})\Big)\\ \end{split} \end{align}
            
In the next step, we proceed by expanding each term in a Taylor series about zero from the right. \begin{align}
            \begin{split}
            = &\lim_{t \searrow 0}\frac{1}{t}\Big(\phi(a, \bar{\mathbf{x}}) - t\Big[\frac{\partial \phi}{\partial a}(a, \bar{\mathbf{x}}) + \sum_{i=1}^k v_i(a, \bar{\mathbf{x}})\frac{\partial \phi}{\partial x_i}(a, \bar{\mathbf{x}}) + \phi(a, \bar{\mathbf{x}})\sum_{i=1}^k \frac{\partial v_i}{\partial x_i}(a, \bar{\mathbf{x}})\\ &+ \mu(a, \bar{\mathbf{x}})\phi(a, \bar{\mathbf{x}})\Big] + \mathcal{O}(t^2) - \phi(a, \bar{\mathbf{x}})\Big)\\
            = &\lim_{t \searrow 0}\Big(\frac{\partial \phi}{\partial a}(a, \bar{\mathbf{x}}) + \sum_{i=1}^k v_i(a, \bar{\mathbf{x}})\frac{\partial \phi}{\partial x_i}(a, \bar{\mathbf{x}}) + \phi(a, \bar{\mathbf{x}})\sum_{i=1}^k \frac{\partial v_i}{\partial x_i}(a, \bar{\mathbf{x}})\\ &+ \mu(a, \bar{\mathbf{x}})\phi(a, \bar{\mathbf{x}}) +\mathcal{O}(t)\Big)\\
            = &-\Big(\frac{\partial \phi}{\partial a}(a, \bar{\mathbf{x}}) + \sum_{i=1}^k v_i(a, \bar{\mathbf{x}})\frac{\partial \phi}{\partial x_i}(a, \bar{\mathbf{x}}) + \phi(a, \bar{\mathbf{x}})\sum_{i=1}^k \frac{\partial v_i}{\partial x_i}(a, \bar{\mathbf{x}})\\ &+ \mu(a, \bar{\mathbf{x}})\phi(a, \bar{\mathbf{x}})\Big)\\
            =& -\left(\frac{\partial \phi}{\partial a} + \sum_{i=1}^k v_i\frac{\partial \phi}{\partial x_i} + \phi\sum_{i=1}^k \frac{\partial v_i}{\partial x_i} + \mu \phi \right)\\
            =& \mathcal{A}\phi
        \end{split}
    \end{align}
Therefore, $\mathcal{A}$, as defined in Equation \eqref{aphi}, is the infinitesimal generator of the strongly continuous one-parameter semigroup $\{S(t)\}_{t \geq 0}$.
\end{proof}

\end{appendices}



\bibliographystyle{bst/sn-mathphys.bst}
\bibliography{heybib}


\begin{thebibliography}{27}
\ifx \bisbn   \undefined \def \bisbn  #1{ISBN #1}\fi
\ifx \binits  \undefined \def \binits#1{#1}\fi
\ifx \bauthor  \undefined \def \bauthor#1{#1}\fi
\ifx \batitle  \undefined \def \batitle#1{#1}\fi
\ifx \bjtitle  \undefined \def \bjtitle#1{#1}\fi
\ifx \bvolume  \undefined \def \bvolume#1{\textbf{#1}}\fi
\ifx \byear  \undefined \def \byear#1{#1}\fi
\ifx \bissue  \undefined \def \bissue#1{#1}\fi
\ifx \bfpage  \undefined \def \bfpage#1{#1}\fi
\ifx \blpage  \undefined \def \blpage #1{#1}\fi
\ifx \burl  \undefined \def \burl#1{\textsf{#1}}\fi
\ifx \doiurl  \undefined \def \doiurl#1{\url{https://doi.org/#1}}\fi
\ifx \betal  \undefined \def \betal{\textit{et al.}}\fi
\ifx \binstitute  \undefined \def \binstitute#1{#1}\fi
\ifx \binstitutionaled  \undefined \def \binstitutionaled#1{#1}\fi
\ifx \bctitle  \undefined \def \bctitle#1{#1}\fi
\ifx \beditor  \undefined \def \beditor#1{#1}\fi
\ifx \bpublisher  \undefined \def \bpublisher#1{#1}\fi
\ifx \bbtitle  \undefined \def \bbtitle#1{#1}\fi
\ifx \bedition  \undefined \def \bedition#1{#1}\fi
\ifx \bseriesno  \undefined \def \bseriesno#1{#1}\fi
\ifx \blocation  \undefined \def \blocation#1{#1}\fi
\ifx \bsertitle  \undefined \def \bsertitle#1{#1}\fi
\ifx \bsnm \undefined \def \bsnm#1{#1}\fi
\ifx \bsuffix \undefined \def \bsuffix#1{#1}\fi
\ifx \bparticle \undefined \def \bparticle#1{#1}\fi
\ifx \barticle \undefined \def \barticle#1{#1}\fi
\bibcommenthead
\ifx \bconfdate \undefined \def \bconfdate #1{#1}\fi
\ifx \botherref \undefined \def \botherref #1{#1}\fi
\ifx \url \undefined \def \url#1{\textsf{#1}}\fi
\ifx \bchapter \undefined \def \bchapter#1{#1}\fi
\ifx \bbook \undefined \def \bbook#1{#1}\fi
\ifx \bcomment \undefined \def \bcomment#1{#1}\fi
\ifx \oauthor \undefined \def \oauthor#1{#1}\fi
\ifx \citeauthoryear \undefined \def \citeauthoryear#1{#1}\fi
\ifx \endbibitem  \undefined \def \endbibitem {}\fi
\ifx \bconflocation  \undefined \def \bconflocation#1{#1}\fi
\ifx \arxivurl  \undefined \def \arxivurl#1{\textsf{#1}}\fi
\csname PreBibitemsHook\endcsname

\bibitem{feng2005global}
\begin{barticle}
\bauthor{\bsnm{Feng}, \binits{Z.}},
\bauthor{\bsnm{Huang}, \binits{W.}},
\bauthor{\bsnm{Castillo-Chavez}, \binits{C.}}:
\batitle{Global behavior of a multi-group sis epidemic model with age
  structure}.
\bjtitle{Journal of Differential Equations}
\bvolume{218}(\bissue{2}),
\bfpage{292}--\blpage{324}
(\byear{2005})
\end{barticle}
\endbibitem

\bibitem{castillo1998global}
\begin{barticle}
\bauthor{\bsnm{Castillo-Chavez}, \binits{C.}},
\bauthor{\bsnm{Feng}, \binits{Z.}}:
\batitle{Global stability of an age-structure model for tb and its applications
  to optimal vaccination strategies}.
\bjtitle{Mathematical biosciences}
\bvolume{151}(\bissue{2}),
\bfpage{135}--\blpage{154}
(\byear{1998})
\end{barticle}
\endbibitem

\bibitem{pruss1981equilibrium}
\begin{barticle}
\bauthor{\bsnm{Pr{\"u}{\ss}}, \binits{J.}}:
\batitle{Equilibrium solutions of age-specific population dynamics of several
  species}.
\bjtitle{Journal of Mathematical Biology}
\bvolume{11}(\bissue{1}),
\bfpage{65}--\blpage{84}
(\byear{1981})
\end{barticle}
\endbibitem

\bibitem{arino1995survey}
\begin{barticle}
\bauthor{\bsnm{Arino}, \binits{O.}}:
\batitle{A survey of structured cell population dynamics}.
\bjtitle{Acta biotheoretica}
\bvolume{43}(\bissue{1}),
\bfpage{3}--\blpage{25}
(\byear{1995})
\end{barticle}
\endbibitem

\bibitem{MetzDiekmannLevin1986}
\begin{bbook}
\beditor{\bsnm{Metz}, \binits{J.A.J.}},
\beditor{\bsnm{Diekmann}, \binits{O.}},
\beditor{\bsnm{Levin}, \binits{S.}} (eds.):
\bbtitle{The {{Dynamics}} of {{Physiologically Structured Populations}}}.
\bsertitle{Lecture {{Notes}} in {{Biomathematics}}},
vol. \bseriesno{68}.
\bpublisher{{Springer Berlin Heidelberg}},
\blocation{{Berlin, Heidelberg}}
(\byear{1986}).
\doiurl{10.1007/978-3-662-13159-6}
\end{bbook}
\endbibitem

\bibitem{sharpe1911problem}
\begin{barticle}
\bauthor{\bsnm{Sharpe}, \binits{F.R.}},
\bauthor{\bsnm{Lotka}, \binits{A.J.}}:
\batitle{L. a problem in age-distribution}.
\bjtitle{The London, Edinburgh, and Dublin Philosophical Magazine and Journal
  of Science}
\bvolume{21}(\bissue{124}),
\bfpage{435}--\blpage{438}
(\byear{1911})
\end{barticle}
\endbibitem

\bibitem{m1925applications}
\begin{barticle}
\bauthor{\bsnm{M'Kendrick}, \binits{A.}}:
\batitle{Applications of mathematics to medical problems}.
\bjtitle{Proceedings of the Edinburgh Mathematical Society}
\bvolume{44},
\bfpage{98}--\blpage{130}
(\byear{1925})
\end{barticle}
\endbibitem

\bibitem{sinko1967new}
\begin{barticle}
\bauthor{\bsnm{Sinko}, \binits{J.W.}},
\bauthor{\bsnm{Streifer}, \binits{W.}}:
\batitle{A new model for age-size structure of a population}.
\bjtitle{Ecology}
\bvolume{48}(\bissue{6}),
\bfpage{910}--\blpage{918}
(\byear{1967})
\end{barticle}
\endbibitem

\bibitem{bell1967cell}
\begin{barticle}
\bauthor{\bsnm{Bell}, \binits{G.I.}},
\bauthor{\bsnm{Anderson}, \binits{E.C.}}:
\batitle{Cell growth and division: I. a mathematical model with applications to
  cell volume distributions in mammalian suspension cultures}.
\bjtitle{Biophysical journal}
\bvolume{7}(\bissue{4}),
\bfpage{329}
(\byear{1967})
\end{barticle}
\endbibitem

\bibitem{bellcell}
\begin{botherref}
\oauthor{\bsnm{Bell}, \binits{G.}},
\oauthor{\bsnm{Anderson}, \binits{E.}}:
Cell growth and division. iii.
Conditions for balanced exponential growth in
\end{botherref}
\endbibitem

\bibitem{webb1985theory}
\begin{bbook}
\bauthor{\bsnm{Webb}, \binits{G.F.}}:
\bbtitle{Theory of Nonlinear Age-dependent Population Dynamics}
vol. \bseriesno{89.}
\bpublisher{M. Dekker},
\blocation{New York}
(\byear{1985})
\end{bbook}
\endbibitem

\bibitem{bellman1959asymptotic}
\begin{bbook}
\bauthor{\bsnm{Bellman}, \binits{R.}},
\bauthor{\bsnm{Cooke}, \binits{K.L.}}:
\bbtitle{Differential-difference Equations}
vol. \bseriesno{6.}
\bpublisher{Academic Press},
\blocation{New York}
(\byear{1963})
\end{bbook}
\endbibitem

\bibitem{hill2020life}
\begin{barticle}
\bauthor{\bsnm{Hill}, \binits{N.C.}},
\bauthor{\bsnm{Tay}, \binits{J.W.}},
\bauthor{\bsnm{Altus}, \binits{S.}},
\bauthor{\bsnm{Bortz}, \binits{D.M.}},
\bauthor{\bsnm{Cameron}, \binits{J.C.}}:
\batitle{Life cycle of a cyanobacterial carboxysome}.
\bjtitle{Science Advances}
\bvolume{6}(\bissue{19}),
\bfpage{1269}
(\byear{2020})
\end{barticle}
\endbibitem

\bibitem{cameron2013biogenesis}
\begin{barticle}
\bauthor{\bsnm{Cameron}, \binits{J.C.}},
\bauthor{\bsnm{Wilson}, \binits{S.C.}},
\bauthor{\bsnm{Bernstein}, \binits{S.L.}},
\bauthor{\bsnm{Kerfeld}, \binits{C.A.}}:
\batitle{Biogenesis of a bacterial organelle: the carboxysome assembly
  pathway}.
\bjtitle{Cell}
\bvolume{155}(\bissue{5}),
\bfpage{1131}--\blpage{1140}
(\byear{2013})
\end{barticle}
\endbibitem

\bibitem{campos2014constant}
\begin{barticle}
\bauthor{\bsnm{Campos}, \binits{M.}},
\bauthor{\bsnm{Surovtsev}, \binits{I.V.}},
\bauthor{\bsnm{Kato}, \binits{S.}},
\bauthor{\bsnm{Paintdakhi}, \binits{A.}},
\bauthor{\bsnm{Beltran}, \binits{B.}},
\bauthor{\bsnm{Ebmeier}, \binits{S.E.}},
\bauthor{\bsnm{Jacobs-Wagner}, \binits{C.}}:
\batitle{A constant size extension drives bacterial cell size homeostasis}.
\bjtitle{Cell}
\bvolume{159}(\bissue{6}),
\bfpage{1433}--\blpage{1446}
(\byear{2014})
\end{barticle}
\endbibitem

\bibitem{annosov1997ordinary}
\begin{botherref}
\oauthor{\bsnm{Annosov}, \binits{D.}},
\oauthor{\bsnm{Aranson}, \binits{S.K.}},
\oauthor{\bsnm{Arnold}, \binits{V.}},
\oauthor{\bsnm{Bronshtein}, \binits{I.}},
\oauthor{\bsnm{Grines}, \binits{V.}},
\oauthor{\bsnm{Ilyashenko}, \binits{Y.S.}}:
Ordinary differential equations and smooth dynamical systems.
Springer
(1997)
\end{botherref}
\endbibitem

\bibitem{tucker1988nonlinear}
\begin{barticle}
\bauthor{\bsnm{Tucker}, \binits{S.L.}},
\bauthor{\bsnm{Zimmerman}, \binits{S.O.}}:
\batitle{A nonlinear model of population dynamics containing an arbitrary
  number of continuous structure variables}.
\bjtitle{SIAM Journal on Applied Mathematics}
\bvolume{48}(\bissue{3}),
\bfpage{549}--\blpage{591}
(\byear{1988})
\end{barticle}
\endbibitem

\bibitem{pazy2012semigroups}
\begin{bbook}
\bauthor{\bsnm{Pazy}, \binits{A.}},
\bauthor{\bsnm{service)}, \binits{S.O.}}:
\bbtitle{Semigroups of Linear Operators and Applications to Partial
  Differential Equations}
vol. \bseriesno{44.}
\bpublisher{Springer},
\blocation{New York, NY}
(\byear{1983})
\end{bbook}
\endbibitem

\bibitem{greiner1988growth}
\begin{botherref}
\oauthor{\bsnm{Greiner}, \binits{G.}},
\oauthor{\bsnm{Nagel}, \binits{R.}}:
Growth of cell populations via one-parameter semigroups of positive operators,
79--105
(1988)
\end{botherref}
\endbibitem

\bibitem{neumann2002evolution}
\begin{botherref}
\oauthor{\bsnm{Neumann}, \binits{W.R.}},
\oauthor{\bsnm{Lorenzi}, \binits{A.}}:
Evolution Equations Semigroups and Functional Analysis.
Birkhauser
(2002)
\end{botherref}
\endbibitem

\bibitem{engel2006short}
\begin{bbook}
\bauthor{\bsnm{Engel}, \binits{K.-J.}},
\bauthor{\bsnm{Nagel}, \binits{R.}}:
\bbtitle{A Short Course on Operator Semigroups}.
\bpublisher{Springer},
\blocation{New York, NY}
(\byear{2006})
\end{bbook}
\endbibitem

\bibitem{van2012asymptotic}
\begin{botherref}
\oauthor{\bsnm{Van~Neerven}, \binits{J.}}:
The Asymptotic Behaviour of Semigroups of Linear Operators.
Operator Theory Advances and Applications,
vol. 88
\end{botherref}
\endbibitem

\bibitem{engel2001one}
\begin{bchapter}
\bauthor{\bsnm{Engel}, \binits{K.-J.}},
\bauthor{\bsnm{Nagel}, \binits{R.}}:
\bctitle{One-parameter semigroups for linear evolution equations}.
In: \bbtitle{Semigroup Forum},
vol. \bseriesno{63},
pp. \bfpage{278}--\blpage{280}
(\byear{2001}).
\bcomment{Springer}
\end{bchapter}
\endbibitem

\bibitem{hunter2001applied}
\begin{bbook}
\bauthor{\bsnm{Hunter}, \binits{J.K.}},
\bauthor{\bsnm{Nachtergaele}, \binits{B.}}:
\bbtitle{Applied Analysis}.
\bpublisher{World Scientific Publishing Company},
\blocation{Singapore}
(\byear{2001})
\end{bbook}
\endbibitem

\bibitem{HanJentzenE2018ProcNatlAcadSciUSA}
\begin{barticle}
\bauthor{\bsnm{Han}, \binits{J.}},
\bauthor{\bsnm{Jentzen}, \binits{A.}},
\bauthor{\bsnm{E}, \binits{W.}}:
\batitle{Solving high-dimensional partial differential equations using deep
  learning}.
\bjtitle{Proc Natl Acad Sci USA}
\bvolume{115}(\bissue{34}),
\bfpage{8505}--\blpage{8510}
(\byear{2018}).
\doiurl{10.1073/pnas.1718942115}
\end{barticle}
\endbibitem

\bibitem{EHanJentzen2022Nonlinearity}
\begin{barticle}
\bauthor{\bsnm{E}, \binits{W.}},
\bauthor{\bsnm{Han}, \binits{J.}},
\bauthor{\bsnm{Jentzen}, \binits{A.}}:
\batitle{Algorithms for solving high dimensional {{PDEs}}: From nonlinear
  {{Monte Carlo}} to machine learning}.
\bjtitle{Nonlinearity}
\bvolume{35}(\bissue{1}),
\bfpage{278}--\blpage{310}
(\byear{2022}).
\doiurl{10.1088/1361-6544/ac337f}
\end{barticle}
\endbibitem

\bibitem{arino1992some}
\begin{barticle}
\bauthor{\bsnm{Arino}, \binits{O.}}:
\batitle{Some spectral properties for the asymptotic behavior of semigroups
  connected to population dynamics}.
\bjtitle{SIAM review}
\bvolume{34}(\bissue{3}),
\bfpage{445}--\blpage{476}
(\byear{1992})
\end{barticle}
\endbibitem

\end{thebibliography}
\end{document}